\documentclass[11pt]{amsart}
\usepackage{amsmath,amsfonts,comment,amsthm,euscript,dirtytalk,mathtools,amssymb,graphicx,mathrsfs,enumitem, cleveref, tikz, pgfplots}
\pgfplotsset{compat=1.8}
\usepackage{StyleSheet}

\begin{document}
\title{Sharp Eigenfunction Bounds on the Torus for large $p$}
\author{Daniel Pezzi}
\date{}

\begin{abstract}
We prove the discrete restriction conjecture holds with no loss when $p>\frac{2d}{d-4}$ and $d\geq 5$. That is, we show optimal $L^p$ bounds for eigenfunctions of the Laplacian on the square torus for large values of $p$. This improves the results of Bourgain and Demeter. Our proof method is a refinement of the circle method approach previously used to establish results with a subpolynomial loss. This represents the first sharp $L^p$ bounds for eigenfunctions on the torus since the work of Cooke and Zygmund.

We present applications to bounds for spectral projectors and the additive energy of integer lattice points on higher dimensional spheres. These results are similarly sharp. We also prove results with a logarithmic loss that hold in a wider range of $p$.
\end{abstract}
\maketitle

\maketitle

\newcommand{\Addresses}{{
  \bigskip
  \footnotesize
  Daniel~Pezzi; \textsc{Department of Mathematics, Krieger Hall, Homewood Campus, Johns Hopkins University, 3400 N. Charles Street Baltimore, MD 21218, United States}\par\nopagebreak
  \textit{E-mail address}, \texttt{dpezzi1@jh.edu}

}}

\section{Introduction}\label{Section1}
\subsection{Background}
Let $\T{d} = \R{d}/\Z{d}$ be the square torus. We may view functions on this manifold as $\Z{d}$-periodic functions on $\R{d}$, whose values are completely determined by their values on the fundamental domain $[0,1]^d$. If $f\in L^2(\T{d})$, it is well known we may expand this function in Fourier series.

\begin{equation}
    f(x) = \sum_{\mathbf{k}\in\Z{d}}\mathcal{F}(f)(\mathbf{k})e^{2\pi i \mathbf{k} \cdot x}.
\end{equation}
The plane waves $e^{2\pi i \mathbf{k}\cdot x}\coloneq e(\mathbf{k}\cdot x)$ are eigenfunctions\footnote{We use $\mathbf{k}$ to represent integer vectors in $\Z{d}$. For other notional conventions, see \Cref{1: Subsection Notation}} of the square root of the negative Laplacian $\sqrt{-\Delta}$ with eigenvalue $2\pi |\mathbf{k}|$. It would be equivalent in what follows to consider eigenfunctions of $-\Delta$, although statements would have altered numerology. Let $P_N$ be the projection onto the eigenspace of  $\sqrt{-\Delta}$ with eigenvalue $2\pi N$. Then 

\begin{equation}\label{1: eSpaceProjectorDefEqn}
    e_N(x) \coloneq P_N f(x) = \sum_{|\mathbf{k}|=N}\mathcal{F}(f)(\mathbf{k})e^{2\pi i \mathbf{k} \cdot x}.
\end{equation}
That is, we project onto integer lattice point frequencies that lie on the sphere of radius $N$. We will also use the notation $\la = N^2$. Bourgain in \cite{bourgainTorusEFunc1993} asked how large such functions can be in $L^p$.

\begin{conjecture}[The Discrete Restriction Conjecture]\label{1: DiscreteRestrictionConj}
    Let $p\geq 2$ and $d\geq 2$. Let $\T{d}$ be the square torus. Suppose $e_N$ is an eigenfunction of  $\sqrt{-\Delta}$. Let $\eps>0$. Then 

    \begin{equation}\label{1: DiscreteRestrictionEqn}
        \lpnorm{e_N}{p}{\T{d}}\lesssim_{\eps} N^\eps(N^{\frac{d-2}{2}-\frac{d}{p}}+1)\lpnorm{e_N}{2}{\T{d}}.
    \end{equation}
    The extra $C_\eps N^\eps$ factor can be omitted if $d\geq 5$.
\end{conjecture}
See \Cref{1: Subsection DiscRestriction} for the intuition for this conjecture, which heavily involves the number of lattice points on spheres with integer radius. Let

\begin{equation*}
    \mathcal{F}_{d,N}\coloneq \{\mathbf{k}=(k_1,...,k_d)\in\Z{d}: |\mathbf k|=N\}.
\end{equation*}
It is well known that the following bound hold for the cardinality of integer lattice points on the sphere.

\begin{equation}
    \# \mathcal{F}_{d,N} \lesssim_\eps N^\eps N^{d-2+\eps}.
\end{equation}
Lattice points on the sphere are more regularly distributed as the dimension increases. The $\eps$ factor may be removed when $d\geq 5$ (which is why the same statement holds for \Cref{1: DiscreteRestrictionConj}). The upper bound may also be replaced by an asymptotic when $d\geq 5$ as every integer $N$ corresponds to a non-empty eigenspace with the given multiplicity. 

\Cref{1: DiscreteRestrictionConj} was proven with a factor of $C_\eps N^\eps$ in all dimensions when $p\leq \frac{2(d+1)}{d-1}$ by the $\ell^2$-decoupling theorem of Bourgain and Demeter \cite{BourgainDemeter2015}. It had been previously proven for $p\geq \frac{2d}{d-3}$ by the same authors in \cite{bourgainDemeterDiscreteRestrictImprovements2013}, also with an $\eps$-loss. Combining the results of these two papers yields the conjecture with loss when $p\geq \frac{2(d-1)}{d-3}$. Our main result is that this loss can be removed for large $p$.

\begin{theorem}\label{1: SharpMainResultThm}
    Let $d\geq 5$ and $p>\frac{2d}{d-4}$. Let $\T{d}$ be the square torus. Then for every eigenfunction $e_N$, the following bound holds.

        \begin{equation}\label{1: DiscreteRestrictionEqn}
        \lpnorm{e_N}{p}{\T{d}}\lesssim N^{\frac{d-2}{2}-\frac{d}{p}}\lpnorm{e_N}{2}{\T{d}}.
    \end{equation}
\end{theorem}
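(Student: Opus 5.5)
We will prove \Cref{1: SharpMainResultThm} through a $TT^*$ argument together with a circle-method analysis of the kernel of a smoothed spectral projector, refining the approach of Bourgain and Demeter in \cite{bourgainDemeterDiscreteRestrictImprovements2013}. Set $\lambda=N^2$ and write $e_N$ as $\sum_{|\mathbf k|^2=\lambda} a_{\mathbf k} e(\mathbf k\cdot x)$. Fix a bump $\psi$ and form the smoothed kernel
\begin{equation*}
K(x)=\sum_{\mathbf k\in\Z{d}}\psi\!\left(\tfrac{|\mathbf k|^2-\lambda}{1}\right)\!\!\phantom{x}e(\mathbf k\cdot x)\,\phi(\mathbf k/N),
\end{equation*}
or, more conveniently, $K(x)=\int_0^1 \sum_{\mathbf k}\phi(\mathbf k/N)e(\mathbf k\cdot x+(|\mathbf k|^2-\lambda)t)\,dt$, which reproduces $e_N$ exactly. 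It therefore suffices to bound the operator $f\mapsto f*K$ from $L^{p'}$ to $L^p$ by $N^{d-2-2d/p}$, because $\|e_N\|_p^2\le \|K*\|_{p'\to p}\|e_N\|_2^2$ follows by the usual duality.

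The plan is to split $t\in[0,1]$ using a Farey dissection of level $N$: major arcs $|t-a/q|\lesssim 1/(qN)$ with $q\le N$. On each arc we apply Poisson summation or the Weyl bound to the Gauss-type sum $\sum_{\mathbf k}\phi(\mathbf k/N)e(\mathbf k\cdot x+|\mathbf k|^2 t)$. This produces $|S(t,x)|\lesssim (N/(\sqrt q(1+N|t-a/q|^{1/2})))^{d}$ essentially, i.e. the classical bound $\sup_x|S|\lesssim N^d q^{-d/2}(1+N^2|t-a/q|)^{-d/2}$. Next we decompose $K=\sum_{s}K_s$ dyadically in $q\sim Q=2^s$ (and further in $|t-a/q|$). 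Each piece is interpolated between two estimates. The $L^1\to L^\infty$ bound is $\|K_{Q}\|_\infty\lesssim N^{d}Q^{-d/2}\cdot Q^2\cdot (QN)^{-1}\cdot(\text{gain from }N^2|t-a/q|)$. The $L^2\to L^2$ bound is the sup of the Fourier multiplier, namely $\sup_{m}|\sum_{q\sim Q}\sum_a\int \psi\, e((m-\lambda)(t))dt|$. The latter is a Ramanujan-sum estimate, of size roughly $Q^{2}/(QN)\cdot$(bounded divisor factor), and it is here that we must avoid the $N^\eps$ loss. We will use the bound $|\sum_{(a,q)=1}e(an/q)|\le (n,q)$ and sum $\sum_{q\sim Q}(n,q)\lesssim Q\,\tau(n)$. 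This is too lossy, so instead we exploit the $q^{-d/2}$ factor coming from the Gauss sums with $d\ge5$; the resulting singular series $\sum_q q^{1-d/2}|c_q(n)|$ converges absolutely without divisor losses once $d\ge5$.

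Interpolating gives, for each dyadic block, $\|K_Q\|_{p'\to p}\lesssim N^{d-2-2d/p}\,Q^{-\alpha(p)}$, where $\alpha(p)>0$ exactly when $p>\frac{2d}{d-4}$. The exponent comes from the balance $Q^{-d/2+1}$ (sup norm after summing $a$ and arc length) against $Q^{1}$ (the multiplier), weighted by $2/p$ and $1-2/p$; the geometric series in $Q$ then sums without loss. The minor-arc/large-$N|t-a/q|$ contributions are treated the same way, with extra decay in $N^2|t-a/q|$ summed dyadically. For the endpoint pieces near $t$ close to $a/q$ with tiny $q$ (the principal term $q=1$), we use the standard Stein–Tomas-type bound for the continuous sphere, which gives exactly $N^{d-2-2d/p}$ for $p>\frac{2(d+1)}{d-1}$.

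The main obstacle is the $L^2$ multiplier bound for the dyadic pieces uniformly in $m$, with no $N^\eps$ loss. The divisor function appears naturally in sums of Ramanujan sums, and the whole point is to organise the decomposition, perhaps keeping the Gauss-sum weight $q^{-d/2}$ inside before taking absolute values, so that only absolutely convergent singular-series–type sums appear. If this fails, the fallback is to accept a $\log$ loss at the critical block and check that the strict inequality $p>\frac{2d}{d-4}$ leaves a power saving $Q^{-\alpha}$ that absorbs it.
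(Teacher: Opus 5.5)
Your architecture works and is a different, leaner route than the paper's. However, the step you flag as the main obstacle is not one, and the remedy you propose for it would not work. The Fourier multiplier of a block with time cutoff $w_{Q,s}$ (arcs $|t-a/q|\sim (N2^s)^{-1}$, $q\sim Q$) is exactly $\widehat{w_{Q,s}}(|\mathbf{k}|^2-\lambda)\prod_j\gamma(k_j/N)$. This comes from integrating out $x$ before any Poisson summation, so Gauss sums never appear on the frequency side and there is no $q^{-d/2}$ to ``keep inside.'' Moreover, $\sum_q q^{1-d/2}|c_q(n)|\ge\sum_{r\mid n,\ r\text{ prime}} r^{1-d/2}(r-1)$ is not bounded uniformly in $n$ for $d=5,6$. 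Your fallback also fails: a loss depending on $N$ (a divisor function or $\log N$) at the dominant block $Q\sim 1$ cannot be absorbed by $Q^{-\alpha(p)}\sim 1$.

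None of this is needed. The trivial bound $|\widehat{w_{Q,s}}(l)|\le\|w_{Q,s}\|_{L^1}\lesssim Q^2/(N2^s)$ has no divisor loss, and it is exactly what the paper uses. Interpolated against $\|K_{Q,s}\|_\infty\lesssim (N2^s)^{\frac d2-1}Q^{2-\frac d2}$, it gives $(N2^s)^{\frac{d-2}{2}-\frac dp}Q^{2-\frac d2+\frac dp}$. This is your own per-$q$ balance of $q^{1-\frac d2}$ against $q$, so your numerology already uses the trivial bound. The threshold $p>\frac{2d}{d-4}$ comes precisely from it; Ramanujan or Kloosterman cancellation would only enlarge the range, and that is where $\epsilon$-losses enter. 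Keep the dyadic split in $|t-a/q|$ that you mention, since interpolating a whole arc at once loses $N^{2/p}$.

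With that correction, here is how the two routes compare. The paper stops its major arcs at $Q<N/100$ and imports the $\epsilon$-lossy bound $\|K_{err}\|_\infty\lesssim N^{\frac{d-1}{2}+\epsilon}$ from Bourgain. It therefore has to:
\begin{enumerate}
\item run the oscillatory-indicator level-set argument, absorbing $K_{err}$ only when $\alpha^2\gtrsim N^{\frac{d-1}{2}+\epsilon_0}$;
\item invoke $\ell^2$-decoupling at low heights;
\item integrate the distribution function with an auxiliary $p_1<p$.
\end{enumerate}
Your Farey dissection of level $N$ has no minor arcs. Every arc has $q\lesssim N$ and $|t-a/q|\le (qN)^{-1}$, where the dispersive bound is lossless. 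The blocks $Q\sim 2^s\sim N$, which play the role of minor arcs, contribute $N^{\frac d2-\frac dp}\le N^{d-2-\frac{2d}{p}}$ precisely when $p\ge\frac{2d}{d-4}$. Hence $\|P_N\|_{p'\to p}\lesssim N^{d-2-\frac{2d}{p}}$ follows directly by $TT^*$, with no imported estimate, no decoupling and no level sets. Sharp Farey cutoffs are harmless, since only pointwise bounds and $L^1$ norms of the cutoffs enter. Stein--Tomas for $q=1$ is superfluous, because that term is just the $Q=1$ block of the same interpolation. What the paper's level-set framework buys is tolerance of an $N^{\epsilon}$-lossy sup bound on part of the kernel. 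That is the mechanism one would need to push below $\frac{2d}{d-4}$ using lossy exponential-sum cancellation for large $Q$.
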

This result contains the first sharp bounds for toral eigenfunctions since the work of Cooke and Zygmund. Cooke proved the result in the course of showing a Cantor-Lebesgue Theorem in \cite{cooke1971}. The result was studied on its own by Zygmund in \cite{Zygmund1974} to answer a question of Fefferman. Both proved the sharp result with no loss when $d=2$ and $2\leq p \leq 4$ using essentially the same geometric argument. Our proof method differs greatly from that work. In particular, we refine the circle method approach used in \cite{bourgainTorusEFunc1993} and \cite{bourgainDemeterDiscreteRestrictImprovements2013}. For this reason, our results are limited to the square torus. As an aside, other works often refer to bounds with a power of $N^\eps$ as \say{sharp}. In this paper, we will reserve sharp to mean that no such factor is present, although, for low dimensions and large $p$, the factor is necessary.

We also able to improve the $N^\eps$ loss to a logarithmic loss in a larger range of $p$.

\begin{theorem}\label{1: logMainResultThm}
    Let $d\geq 6$ and $p>\frac{2d}{d-3}$ or $d=5$ and $p>6$. Let $\T{d}$ be the square torus. Then, for every eigenfunction $e_N$, the following bound holds.

        \begin{equation}\label{1: DiscreteRestrictionEqn}
        \lpnorm{e_N}{p}{\T{d}}\lesssim (\log N)^{\frac{d-2}{p(d-4)}}N^{\frac{d-2}{2}-\frac{d}{p}}\lpnorm{e_N}{2}{\T{d}}.
    \end{equation}
\end{theorem}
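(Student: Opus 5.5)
The plan is to run Bourgain's circle-method/level-set argument from \cite{bourgainTorusEFunc1993, bourgainDemeterDiscreteRestrictImprovements2013}. The $N^\eps$ losses there come from divisor-type bounds. We replace them by exact bounds on the Fourier side of the kernel that hold for $d\geq5$; the only remaining loss is a count of dyadic scales, which is what produces the power of $\log N$. Normalize $\lpnorm{e_N}{2}{\T{d}}=1$, write $e_N=\sum_{|\mathbf k|=N}a_{\mathbf k}e(\mathbf k\cdot x)$, and for $\alpha>0$ set $E_\alpha=\{|e_N|>\alpha\}$. Take $f=\mathbf 1_{E_\alpha}\,\overline{\operatorname{sgn} e_N}$. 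By Cauchy--Schwarz in $\mathbf k$,
\begin{equation*}
\alpha^2|E_\alpha|^2\le |\langle e_N,f\rangle|^2\le \langle K_N*f,f\rangle,\qquad K_N(x)=\sum_{|\mathbf k|=N}e(\mathbf k\cdot x).
\end{equation*}
So everything reduces to a good decomposition of the kernel $K_N$.

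\textbf{Step 1: decomposition of the kernel.} We write
\begin{equation*}
K_N(x)=\int_0^1\sum_{\mathbf k}\eta(\mathbf k/N)\,e\bigl(\mathbf k\cdot x+(|\mathbf k|^2-N^2)t\bigr)\,dt,
\end{equation*}
where $\eta$ is a smooth cutoff. We then perform a Farey dissection of order $N$ and group the arcs around $a/q$ with $q\sim Q$ dyadic, giving $K_N=\sum_{Q\le N}K_Q$. Each $K_Q$ should satisfy two bounds.
\begin{itemize}
\item[(a)] A physical-space bound $\|K_Q\|_{L^\infty}\lesssim N^{d-1}Q^{-\frac{d-2}{2}}$, from the Gauss-sum bound $|S(a/q)|\lesssim q^{d/2}$ together with the arc length.
\item[(b)] A Fourier-side bound $\|\widehat{K_Q}\|_{\ell^\infty}\lesssim N^{d-2}Q^{-\frac{d-4}{2}}$, and for the total $\bigl\|\sum_{Q>Q_0}\widehat{K_Q}\bigr\|_{\ell^\infty}\lesssim N^{d-2}Q_0^{-\frac{d-4}{2}}\cdot\Lambda$.
\end{itemize}
In (b), $\Lambda=O(1)$ when $d\ge6$, but a single factor $\log N$ appears at the critical summation. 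The Fourier coefficients involve Ramanujan/Kloosterman-type sums $\sum_{a}\,q^{-d}S(a/q)e(-am/q)$. For $d\ge5$ these are summable in $q$ with the stated decay, using the multiplicativity of Gauss sums and the Weil bound for $d$ odd.

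\textbf{Step 2: level-set estimate.} Fix $\alpha$ and choose $Q_0$ so that $\bigl\|\sum_{Q\le Q_0}K_Q\bigr\|_\infty\le\alpha^2/2$. Using (a) and summing the geometric series, this means $Q_0\sim (N^{d-1}\alpha^{-2})^{\frac{2}{d-2}}$. The small-$Q$ part is then absorbed on the left, and the large-$Q$ part is controlled by (b):
\begin{equation*}
\alpha^2|E_\alpha|\lesssim N^{d-2}Q_0^{-\frac{d-4}{2}}\Lambda,
\qquad\text{i.e.}\qquad
|E_\alpha|\lesssim \Lambda\,N^{d-2}\alpha^{-2}\bigl(\alpha^2N^{1-d}\bigr)^{\frac{d-4}{d-2}}.
\end{equation*}
This gives $|E_\alpha|\lesssim\Lambda\,N^{\gamma}\alpha^{-\frac{4}{d-2}}$ for an explicit $\gamma$. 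For $\alpha$ near the maximum $N^{\frac{d-2}{2}}$, inserting this into $\int p\alpha^{p-1}|E_\alpha|\,d\alpha$ gives exactly $N^{p\frac{d-2}{2}-d}$.

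\textbf{Step 3: integration of the distribution function.} For small $\alpha$ we use Chebyshev with the trivial $L^2$ bound, or with the decoupling bound at $p_0=\frac{2(d+1)}{d-1}$, which is available up to $N^\eps$. The regime $p>\frac{2d}{d-3}$, and $p>6$ when $d=5$, is exactly where the large-$\alpha$ contribution dominates and the small-$\alpha$ pieces are lower order by a power of $N$. The logarithm enters only through $\Lambda$, and only in the transitional band of $\alpha$. Interpolating the bound $|E_\alpha|\lesssim(\log N)\,(\cdots)$ against the log-free bounds at the two ends of that band, with $\theta=\frac{d-2}{d-4}$ as the weight coming from the exponents in Step 2, gives $\lpnorm{e_N}{p}{\T{d}}^p\lesssim(\log N)^{\frac{d-2}{d-4}}N^{p\frac{d-2}{2}-d}$. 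Taking $p$-th roots yields the factor $(\log N)^{\frac{d-2}{p(d-4)}}$.

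The main obstacle is Step 1(b). We need a bound on $\widehat{K_Q}$ that is uniform in the frequency $\mathbf m$ and carries no divisor loss, i.e.\ we must control $\sum_{q\sim Q}\sum_{(a,q)=1}q^{-d}S(a/q)e(-a(|\mathbf m|^2-N^2)/q)$ together with the archimedean factor. I would first try to handle it by factoring $q=q_1q_2$, with $q_1$ coprime to $|\mathbf m|^2-N^2$ (where we get square-root cancellation in the Ramanujan/Kloosterman sum) and $q_2$ dividing it. Then $\sum_{q_2\mid n}q_2^{-\frac{d-4}{2}}\lesssim1$ for $d\ge5$, with a $\log$ only at the endpoint. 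In $d=5$ the weaker decay forces the restriction $p>6$ rather than $p>\frac{2d}{d-3}=5$.
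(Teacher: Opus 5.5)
Your argument breaks down in Steps~1(b) and~2, before any number theory enters.

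\textbf{Step 1(b) is not a Fourier-side bound.} Every piece $K_\chi=\int\chi(t)\prod_jG(t,x_j)e(-\lambda t)\,dt$ cut out by a time cutoff satisfies $|\mathcal F(K_\chi)(\mathbf m)|=|\hat\chi(|\mathbf m|^2-\lambda)|\prod_j\gamma(m_j/N)\lesssim\|\chi\|_{L^1}\lesssim 1$. These coefficients involve only Ramanujan sums in $l=|\mathbf m|^2-\lambda$. The Gauss and Kloosterman/Sali\'e sums $\mathcal S(\mathbf m,q)$ you invoke live in the Poisson-dual expansion $\sum_{\mathbf m}\mathcal S(\mathbf m,q)\prod_jJ(x_j,\varphi,m_j,q)$, which is a physical-space formula. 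Your quantity $N^{d-2}Q^{-\frac{d-4}{2}}$ is (a bound for) the sup-norm of the $q\sim Q$ arcs. Inserted into $\alpha^2|E_\alpha|\lesssim\|\mathcal F(K_2)\|_{\ell^\infty}$, it gives something worse than the trivial $|E_\alpha|\le\alpha^{-2}$. Your own formula is $|E_\alpha|\lesssim N^{\frac{d}{d-2}}\alpha^{-\frac{4}{d-2}}$, which equals $N^{-\frac{d-4}{d-2}}$ rather than $N^{-d}$ at $\alpha\sim N^{\frac{d-2}{2}}$. So the claim that the distribution integral gives exactly $N^{p\frac{d-2}{2}-d}$ is false.

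\textbf{The absorption in Step 2 has the roles reversed.} Bound (a) \emph{decreases} in $Q$, so $\sum_{Q\le Q_0}K_Q$ is dominated by $Q\sim 1$. That piece carries the main term of $K(0)=\#\mathcal F_{d,N}\sim N^{d-2}$; the tail is only $O(N^{d-2}Q_0^{-\frac{d-4}{2}})$. So it cannot be absorbed into $\alpha^2/2$. Your $Q_0$ is exactly the one making the \emph{tail} $\sum_{Q>Q_0}N^{d-1}Q^{-\frac{d-2}{2}}$ equal to $\alpha^2$. If you swap roles, the low-$Q$ Farey arcs must be handled on the Fourier side. At $l=0$ there is no cancellation at all: the coefficient is the measure of the arcs, $\sim Q_0/N$. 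This gives $|E_\alpha|\lesssim\alpha^{-2}Q_0N^{-1}$, a full power of $N$ short at the top height. At the least, the arcs would have to be split further by $|t-a/q|\sim(N2^s)^{-1}$, as in \Cref{Section3}.

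Two smaller points. First, $\sum_{q\mid n}q^{-\frac{d-4}{2}}$ is not $O(1)$ uniformly in $n$ for $d=5,6$; for $d=5$ it can exceed any fixed power of $\log n$. Second, $\theta=\frac{d-2}{d-4}>1$ cannot be an interpolation weight.

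\textbf{The missing idea} is the paper's replacement of any dissection by a single \emph{normalized average}. For $N\le Q\le N^2$, let $\eta_Q=c_Q\sum\eta((t-\frac aq)Q^2)$ over $a/q$ with $q\in[Q,2Q]$ \emph{prime}, where $c_Q\sim\log Q$ forces $\int\eta_Q=1$. Set $K^Q=\int\prod_jG(t,x_j)e(-\lambda t)\eta_Q(t)\,dt$.
\begin{itemize}
\item Fourier side: since $1-\eta_Q$ has mean zero, $\mathcal F(K-K^Q)$ vanishes on the sphere. Off the sphere, the prime-modulus Ramanujan sums (each $-1$ or $q-1$) give $\|\mathcal F(K-K^Q)\|_{\ell^\infty}\lesssim\log Q\,Q^{-1}$ (\Cref{2: KerrFourierBoundProp}).
\item Physical side: the loss-free bound $|G|\lesssim q^{1/2}$ near $a/q$ with $q\ge N$ (\Cref{2: kernelBoundProp}), together with the measure of $\{t:|G(t,x)|\ge 2^s\}$ from the dispersive estimate, gives $\|K^Q\|_{L^\infty}\lesssim\frac{\log Q}{\log(NQ^{-1/2})}N^2Q^{\frac{d-4}{2}}$ (\Cref{2: KQSpaceBoundProp}).
\end{itemize}
This sup-norm \emph{grows} with $Q$, so one takes $Q$ as large as $\alpha^2$ allows. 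Then $|E_\alpha|\lesssim\log Q\,\alpha^{-2}Q^{-1}$ becomes $(\log N)^{1+\frac{2}{d-4}}N^{\frac{4}{d-4}}\alpha^{-\frac{2d-4}{d-4}}$ for $\alpha\gtrsim N^{\frac{d-1}{3}+\epsilon_0}$, which integrates directly to the theorem. So the logarithm comes from $c_Q$, and its power $1+\frac{2}{d-4}=\frac{d-2}{d-4}$ comes from solving for $Q$, not from a divisor sum or an interpolation. Below that height the paper uses $|E_\alpha|\lesssim N^{\epsilon}\alpha^{-\frac{2(d+1)}{d-1}}$. Comparing that range with $N^{\frac{p(d-2)}{2}-d}$ is what forces $p>6$ when $d=5$.
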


\subsection{Organization of the paper}
In \Cref{1: Subsection Applications} we present some immediate corollaries of our main theorem which may be of independent interest. In \Cref{1: Subsection DiscRestriction} we give a tour of the discrete restriction conjecture that intuitively establishes the numerology of the conjecture and when a $C_\eps N^\eps$ factor is necessary. \Cref{1: Subsection Notation} contains the notation we use.

In \Cref{Section2} we present the proof of \Cref{1: logMainResultThm} and in \Cref{Section3} we present the proof of \Cref{1: SharpMainResultThm}. These sections are written to be largely independent of each other. Throughout both proofs we use certain known exponential sum estimates. We sketch their proofs in \Cref{SectionExpSumEstimates}. As our proof is broadly a refinement of the work of Bourgain and Demeter, specifically \cite{bourgainDemeterDiscreteRestrictImprovements2013}, we take \Cref{SectionComparison} to list the differences between this proof and theirs.

\subsection{Some Applications}\label{1: Subsection Applications}
We present two applications of our main results: bounds for approximate eigenfunctions and bounds for the additive energy of spheres.

Instead of studying $L^2\rightarrow L^p$ bounds for projections onto eigenspaces, one could instead consider projections onto spectral bands. This is equivalent to estimating the $L^p$ norm of $L^2$-normalized quasimodes. For $1\geq \delta \gtrsim \la^{-1}$, define

\begin{equation}\label{1: spectralProjectorDef}
    P_{N,\delta}f(x) = \sum_{\mathbf{k}\in A_{N,\delta}}\mathcal{F}(f)(\mathbf{k})e(\mathbf{k}\cdot x).
\end{equation}
Here $A_{N,\delta}$ is an annulus of radius $N$ and width $\delta$. There is no benefit to projecting onto smaller windows as, when $\delta \leq \frac{1}{4}\la^{-1}$, the projector equals $P_N$ defined in \eqref{1: eSpaceProjectorDefEqn}. Germain and Myerson asked for the sharp $L^2\rightarrow L^p$ bounds for this object in \cite{germainMyersonSpecProj2022}, after the problem was first studied by Hickman \cite{Hickman2020}. Sharp estimates for $p\leq \frac{2(d+1)}{d-1}$ for $\delta$ away from the eigenfunction case were proven in \cite{pezzi2025} for all tori, not just square ones.

We confirm the sharp version of the conjecture of Germain and Myerson in the analogous range.

\begin{theorem}\label{1: specProjCorThm}
    Let $d\geq 5$ and $\T{d}$ be the square torus. Let $p>\frac{2d}{d-4}$ and let $P_{N,\delta}$ be the spectral projector with $1\geq \delta \gtrsim \la^{-1}$. Then

    \begin{equation}\label{1: specProjCorEqn}
        \lpnorm{P_{N,\delta}f}{p}{\T{d}}\lesssim N^{\frac{d-1}{2}-\frac{d}{p}}\delta^{1/2}\lpnorm{f}{2}{\T{d}}.
    \end{equation}
\end{theorem}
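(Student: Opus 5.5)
The plan is to reduce \Cref{1: specProjCorThm} directly to \Cref{1: SharpMainResultThm}. I decompose the annulus $A_{N,\delta}$ into the integer-lattice spheres it contains, apply the sharp eigenfunction bound to each piece, and recombine the pieces using orthogonality in $L^2$. No new exponential sum input should be needed.

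\textbf{Step 1: counting the spheres.} Every $\mathbf k\in\Z{d}$ satisfies $|\mathbf k|^2=m$ for some integer $m$. The condition $\mathbf k\in A_{N,\delta}$, i.e.\ $\bigl||\mathbf k|-N\bigr|\lesssim\delta$, forces
\[
|m-N^2|\lesssim N\delta+\delta^2\lesssim N\delta,
\]
using $\delta\le 1$. Hence at most $O(1+N\delta)$ values of $m$ occur, and
\[
P_{N,\delta}f=\sum_{m\in M}P_{\sqrt m}f, \qquad \#M\lesssim 1+N\delta,
\]
where $P_{\sqrt m}$ is the projection onto the eigenspace with frequencies on the sphere of radius $\sqrt m\approx N$.

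\textbf{Step 2: the bound on each sphere.} Apply \Cref{1: SharpMainResultThm} to each $P_{\sqrt m}f$. I use it with the radius $\sqrt m$ in place of $N$. Its proof, via the circle method, only uses $|\mathbf k|^2=m$, and for $d\ge5$ every such $m$ gives an eigenspace of the expected size. Since $\sqrt m\approx N$, this gives
\[
\lpnorm{P_{\sqrt m}f}{p}{\T{d}}\lesssim N^{\frac{d-2}{2}-\frac dp}\lpnorm{P_{\sqrt m}f}{2}{\T{d}}.
\]

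\textbf{Step 3: recombining.} The triangle inequality, then Cauchy--Schwarz in $m$, then orthogonality of the eigenspaces give
\[
\lpnorm{P_{N,\delta}f}{p}{\T{d}}\lesssim N^{\frac{d-2}{2}-\frac dp}\,(\#M)^{1/2}\Bigl(\sum_{m\in M}\lpnorm{P_{\sqrt m}f}{2}{\T{d}}^2\Bigr)^{1/2}\le N^{\frac{d-2}{2}-\frac dp}(1+N\delta)^{1/2}\lpnorm{f}{2}{\T{d}}.
\]
When $N\delta\gtrsim1$, the factor $(1+N\delta)^{1/2}$ is $\approx (N\delta)^{1/2}$. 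This is exactly $N^{\frac{d-1}{2}-\frac dp}\delta^{1/2}$, the bound \eqref{1: specProjCorEqn}.

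\textbf{Main obstacle: the smallest windows.} The only delicate point is at the lower end of the range of $\delta$. There the annulus meets $O(1)$ spheres, and the estimate is just the single-eigenspace bound of \Cref{1: SharpMainResultThm}. I would check carefully that, with the paper's normalization of $A_{N,\delta}$ and the threshold $\delta\gtrsim\la^{-1}$, this single-sphere bound is consistent with \eqref{1: specProjCorEqn} in that regime. This requires matching the width convention for $A_{N,\delta}$ against the remark that $P_{N,\delta}=P_N$ for very small $\delta$.

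Everything else is routine. The sharpness of the result, with no $N^\eps$ loss, is inherited entirely from \Cref{1: SharpMainResultThm}.
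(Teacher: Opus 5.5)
Your argument is correct and is essentially the paper's. The cited $TT^*$ proof (Demeter--Germain, Lemma 5.1) also writes $P_{N,\delta}=\sum_m P_{\sqrt m}$ over the $\lesssim 1+N\delta$ spheres in the annulus and bounds $\|P_{N,\delta}\|_{2\to p}^2=\|P_{N,\delta}\|_{p'\to p}\le\sum_m\|P_{\sqrt m}\|_{2\to p}^2$, with each term supplied by \Cref{1: SharpMainResultThm}; your triangle inequality plus Cauchy--Schwarz is the same estimate without $TT^*$.

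The small-window issue you flag is a notational slip in the statement, not a gap. The threshold must be read as $\delta\gtrsim N^{-1}$, since the $\lambda$ there is Germain--Myerson's $\lambda=N$, not $N^2$. For $N^{-2}\lesssim\delta\ll N^{-1}$ one has $P_{N,\delta}=P_N$, and the point-focusing example gives $\|P_N\|_{2\to p}\gtrsim N^{\frac{d-2}{2}-\frac dp}\gg N^{\frac{d-1}{2}-\frac dp}\delta^{1/2}$, so the stated bound would be false there. For $\delta\gtrsim N^{-1}$, your factor satisfies $(1+N\delta)^{1/2}\lesssim (N\delta)^{1/2}$, which finishes the proof.
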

As a note of comparison, $\la$ in the notation of \cite{germainMyersonSpecProj2022} is $N$ here. The proof is a straightforward application of $TT^*$ and crucially used that $p>\frac{2d}{d-2}$; see \cite{demeterGermain2024} Lemma 5.1 for a proof that \Cref{1: SharpMainResultThm} implies \Cref{1: specProjCorThm}. That proof is given in dimension $2$ but holds in any dimension.

It is well known that discrete restriction estimates imply bounds on the additive energy of lattice points on spheres. This connection was used to improve bounds outside of the range of $p$ for which the conjecture with $\epsilon$-loss in known by Mudgal \cite{mudgal2022}. Also, see that paper for an in depth exploration of the connection between additive combinatorics and the discrete restriction conjecture. 

For any set $A\subset \Z{d}$, we define its additive energy to be

\begin{equation}\label{1: additiveEnergyDef}
    E_{n}(A) = \#\{(a_1,...,a_{2n})\in (A)^{2n}: a_1+...+a_n = a_{n+1}+...+a_{2n}\}.
\end{equation}
Additive energies capture the additive structure of a set. Approximate groups maximize the additive energy for sets of a fixed size. For $d\geq 5$, we give sharp results for the additive energy of spheres.

Before stating our results, we quickly derive the conjectured bound. For $d\geq 5$, we have $\#\mathcal{F}_{d,N}\sim N^{d-2}$. Consider $n\mathcal{F}_{d,N}$, the set formed by adding $n$ elements of $\mathcal{F}_{d,N}$. Each element in this set has magnitude $\lesssim_n N$, and $\#n\mathcal{F}_{d,N}\lesssim N^{n(d-2)}$. If the sumsets equidistributed over lattice points in a ball of radius $\lesssim_n N$, each point would have $\sim N^{n(d-2)-d}$ $n$-tuples that sum to that given point. An application of Cauchy-Schwarz gives, for $d\geq 5$, the lower bound

\begin{equation*}
    E_{n}(\mathcal{F}_{N,d})\gtrsim N^{2n(d-2)-d}.
\end{equation*}
One conjectures that this lower bound is in fact an upper bound. We confirm this conjecture for large $d$ and $p$, using the well known connection between $L^p$ norms for even integers and additive combinatorics.

\begin{theorem}\label{1: AdditiveEnergyTheorem}
    We have $ E_{n}(\mathcal{F}_{N,d})\sim N^{2n(d-2)-d}$ when

    \begin{enumerate}
        \item $d\geq 9$.
        \item $d = 7,8$ and $n\geq 3$.
        \item $d=6$ and $n\geq 4$.
        \item $d=5$ and $n\geq 6$.
    \end{enumerate}
    Additionally, an upper bound holds with a $(\log N)^{\frac{d-2}{d-4}}$ factor if 

    \begin{enumerate}
        \item $d=7,8$ and $n= 2$.
        \item $d = 6$ and $n=3$.
        \item $d=5$ and $n= 4,5$.
    \end{enumerate}
\end{theorem}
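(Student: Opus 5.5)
The plan is to use the standard identity linking even $L^{2n}$ norms to additive energy. Take $f$ with $\mathcal{F}(f)(\mathbf{k}) = 1$ for $\mathbf{k}\in\mathcal{F}_{d,N}$ and $0$ otherwise, so $e_N(x) = \sum_{\mathbf{k}\in\mathcal{F}_{d,N}} e(\mathbf{k}\cdot x)$. Expanding $|e_N|^{2n} = e_N^n\overline{e_N}^n$ and integrating over $\T{d}$ by orthogonality gives
\begin{equation*}
    \lpnorm{e_N}{2n}{\T{d}}^{2n} = E_n(\mathcal{F}_{d,N}).
\end{equation*}
Since $d\geq 5$, we also have $\lpnorm{e_N}{2}{\T{d}}^2 = \#\mathcal{F}_{d,N}\sim N^{d-2}$.

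For the upper bound, apply \Cref{1: SharpMainResultThm} with $p=2n$, provided $2n>\frac{2d}{d-4}$, i.e. $n>\frac{d}{d-4}$. This yields
\begin{equation*}
    E_n(\mathcal{F}_{d,N})\lesssim N^{2n(\frac{d-2}{2}-\frac{d}{2n})}N^{n(d-2)} = N^{2n(d-2)-d}.
\end{equation*}
The lower bound is the Cauchy--Schwarz argument sketched before the statement. Let $r(m)$ count the $n$-tuples summing to $m$. Then $\sum_m r(m) = (\#\mathcal{F}_{d,N})^n\sim N^{n(d-2)}$, and $r$ is supported in a ball of radius $nN$, which contains $\lesssim N^d$ lattice points. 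Hence $E_n=\sum_m r(m)^2\gtrsim N^{2n(d-2)-d}$.

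It remains to check the numerology of the condition $n>\frac{d}{d-4}$:
\begin{itemize}
    \item For $d\geq 9$, $\frac{d}{d-4}<2\leq n$, so every $n\geq 2$ is covered (note $n\geq2$ is implicit, since $E_1$ is trivial).
    \item For $d=7$ and $d=8$, $\frac{d}{d-4}$ equals $7/3$ and $2$, so the condition holds exactly when $n\geq 3$.
    \item For $d=6$, $\frac{d}{d-4}=3$, so we need $n\geq4$.
    \item For $d=5$, $\frac{d}{d-4}=5$, so we need $n\geq6$.
\end{itemize}

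For the logarithmic statements, apply \Cref{1: logMainResultThm} with $p=2n$. For $d\geq6$ this requires $2n>\frac{2d}{d-3}$, i.e. $n>\frac{d}{d-3}$. That threshold is $2$ for $d=6$ (giving $n=3$), $7/4$ for $d=7$, and $8/5$ for $d=8$ (giving $n=2$ in both cases). For $d=5$ the theorem requires $2n>6$, giving $n=4,5$. Raising the bound of \Cref{1: logMainResultThm} to the power $2n$ turns the factor $(\log N)^{\frac{d-2}{p(d-4)}}$ into $(\log N)^{\frac{d-2}{d-4}}$, since $p=2n$. The main term is computed exactly as in the sharp case.

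There is no real obstacle here. The only points needing care are that the $L^2$ normalization uses the asymptotic $\#\mathcal{F}_{d,N}\sim N^{d-2}$, which is valid for $d\geq5$, and that each case is checked against the strict inequalities in the hypotheses of the two main theorems.
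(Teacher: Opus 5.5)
Your proposal is correct and matches the paper's argument. The paper also writes $E_n(\mathcal{F}_{N,d})=\|K\|_{L^{2n}}^{2n}$, where $K$ is your $e_N$ with all coefficients equal to $1$. It then applies \Cref{1: SharpMainResultThm} or \Cref{1: logMainResultThm} with $p=2n$, and takes the lower bound from the Cauchy--Schwarz count given just before the statement. Your case checks of $n>\frac{d}{d-4}$ and $n>\frac{d}{d-3}$ (or $n>3$ when $d=5$), and your note that $n\geq 2$ is implicit, spell out what the paper leaves to the reader.
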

\noindent
To prove this, one can directly compute that

\begin{equation*}
    E_{n}(\mathcal{F}_{N,d}) = \lpnorm{K}{2n}{\T{d}}^{2n}.
\end{equation*}
Here, $K(x)$ is the convolution kernel of $P_N$, which is given by \eqref{1: spectralProjectorDef} in the case $\mathcal{F}(f)(\mathbf{k})= 1$ for every $\mathbf{k}$. The result then follows from an application of \Cref{1: SharpMainResultThm} or \Cref{1: logMainResultThm}, depending on which is applicable in the given range of $p$ and $d$.

\subsection{A quick tour of Discrete Restriction}\label{1: Subsection DiscRestriction}
To explain the intuition behind \Cref{1: DiscreteRestrictionConj}, we will present the more general concept of constructive interference. Let $\Xi$ be a set of frequencies $\xi$ such that $|\xi|\sim N$ and $\#\Xi = M$. Consider the sum

\begin{equation*}
    f(x) = \sum_{\xi\in \Xi}e(\xi \cdot x).
\end{equation*}
Clearly, $f(0)=M$. By continuity, we have $|f(0)|\geq M/2$ in a ball of radius $\sim N^{-1}$ centered at $0$. We then have

\begin{equation*}
    \int_{B(0,1)}|f(x)|^pdx \gtrsim M^pN^{-d}.
\end{equation*}
The constructive interference of the exponentials, which is a manifestation of the uncertainty principle, forces a function that is very large on a small neighborhood of a point. In the case of the torus, when $d\geq 5$, we have $M\sim N^{d-2}$. The same computation then gives the first bound on the right hand side of \Cref{1: DiscreteRestrictionConj}. When $d=2,3$ or $4$, we have a factor of $C_\eps N^\epsilon$.

This point focusing example is a general phenomena that saturates $L^p$ norms of eigenfunctions (or, more generally, quasimodes). However, the two terms in the discrete restriction conjecture balance at $p=\frac{2d}{d-2}$. On generic manifolds, we expect the critical value to be $p=\frac{2(d+1)}{d-1}$. The difference is due to the degenerate number of lattice points on spheres as compared to arbitrary $1$-separated sets, which gives a global count of $N^{d-2}$ as opposed to $N^{d-1}$.

The second term on the right hand side of \Cref{1: DiscreteRestrictionConj} can be seen by considering a single plane wave. For an example with full support, one can generate a host of random examples analyzed with Khintchine's inequality that experience maximum destructive interference. The differences in the saturating examples lead to the differences in techniques used for $p>\frac{2d}{d-2}$ and $p<\frac{2d}{d-2}$. Harmonic analysis techniques seem most suited for smaller value of $p$, while number theory works well with the analysis of exponential sums experiencing constructive interference. See \cite{GermainMyersonPezzi} for an exploration of this dichotomy in the case of spectral projectors. There, sharp estimates are proven both using the circle method and cancellation between exponential sums.

\subsection{Notation}\label{1: Subsection Notation}
We use $|\cdot|$ to denote both the magnitude of a vector and the Lebesgue measure of a set; which interpretation is being used is always clear from context. We use $\#A$ to denote the cardinality of a set $A$ and to emphasize that the set is finite.

We denote $\mathbf{k}$ a vector in $\Z{d}$ with components $(k_1,...,k_d)$. The variable $x$ will denote a point in $\T{d}$. We often abuse notation to identify this with a point in the fundamental domain $[0,1]^d$. This is most commonly done in the $d=1$ case to identify $[0,1]$ and $S^1$.

We write $A\lesssim B$ to mean that $A\leq C B$ for some implicit constant $C$. If we wish to emphasize that $C  = C(P)$ for some parameter list $P$, we write $A\lesssim_P B$. We use similar notation for $A\gtrsim B$. We write $A\sim B$ to mean $A\lesssim B$ and $B\lesssim A$ (with the same parameter list). We will always allow the implicit constant to depend on the dimension and the value $p$, but never on the spectral parameter $N$.

We use $(a,b)$ to denote the greatest common divisor of integers $a$ and $b$.

We will write $e(y) = e^{2\pi i y}$. For a function defined on $\R{d},$ we will write $\hat{f}$ for the Fourier transform:

\begin{equation*}
    \hat{f}(\xi) = \int_{\R{d}}e(-\xi \cdot x)f(x)dx.
\end{equation*}
For a function $g$ defined on $\T{d}$, we will write its Fourier coefficients as $\mathcal{F}(g)(\mathbf{k})$. That is

\begin{equation*}
    g(x) = \sum_{\mathbf{k}\in\Z{d}}\mathcal{F}(g)(\mathbf{k})e(\mathbf{k}\cdot x).
\end{equation*}
The $\mathbf{k}$-th Fourier coefficient is 

\begin{equation*}
    \mathcal{F}(g)(\mathbf{k}) = \int_{\T{d}}e(-\mathbf{k}\cdot x)g(x)dx.
\end{equation*}
To ease comparison, we have tried where possible to emulate the notation of \cite{bourgainDemeterDiscreteRestrictImprovements2013}.

\subsection{Thanks} The author would like to thank Xiaoqi Huang and Connor Quinn for careful readings of this and earlier drafts of this manuscript. The author would also like to thank Simon Myerson for clarifying conversations on exponential sums which improved the presentation of Section 4, and Jonathan Hickman for pointing out an error in the definition of the local operator in Section 3.
\section{A $\log$ loss argument}\label{Section2}
\noindent
In this section we prove \Cref{1: logMainResultThm}.

\subsection{Preliminary estimates}
Let $K(x)$ be the convolution kernel (over $\T{d}$) of $P_N$. As $K(x)$ is the sum of exponentials whose frequencies are given by the lattice points on $NS^{d-1}$, it is difficult to analyze directly. It can be written as

\begin{equation}\label{2: kernel1stDefEqn}
    K(x) \coloneq \sum_{\mathbf{k}\in\mathcal{F}_{d,N}}e(\mathbf{k}\cdot x).
\end{equation}
It is beneficial to represent this discrete sum in a continuous way. The integration kernel $K(x)$ can be realized as

\begin{equation}\label{2: intKernelEquation}
    K(x) = \int_{[0,1]}\prod_{j=1}^d \bigPara{\sum_{k\in\Z{}}\gamma(k/N)e(kx_j+k^2t)} e(-\la t)dt.
\end{equation}
The function $\gamma$ is a smooth bump function supported in the ball of radius $\lesssim 1$. We note that everything in the integrand is $1$-periodic, and so the integration can be viewed as being over (a rescaled version of) $S^1$. The above representation selects out $\mathbf{k}\in\Z{d}$ such that $|\mathbf{k}|^2-\la = 0$, as in any other case the integral is $0$ by periodicity. Taking inspiration from the circle method from analytic number theory, we expect the integrand to be larger near rational times with small denominator. In this instance, it is productive to cut around times with very small denominator, as the rest of the function will have small Fourier coefficients.

Fix integer $Q$ such that $N\leq Q\leq N^2$. Define

$$D_{Q}\coloneq \{q: Q\leq q \leq 2Q,\, q\text{ is prime} \}.$$
We then consider 

$$R_{Q} \coloneq \{\frac{a}{q}: q\in D_Q, (a,q)=1\}.$$
It is immediate that $\#R_{Q}\sim Q^2(\log Q)^{-1}$. Let $1_{[-1/20,1/20]}\leq \eta \leq 1_{[-1/10,1/10]}$ be a Schwartz bump function. Define

\begin{equation}\label{2: etaQDef}
    \eta_Q = c_Q \sum_{a/q\in R_{Q}}\eta((t-\frac{a}{q})Q^2).
\end{equation}
The constant $c_Q$ is chosen so that $\int_{[0,1]} \eta_Q dt = 1$. Note that $c_Q \sim \log Q$ by direct computation. This will help get an improved bound for Fourier coefficients on the error term later. We now define

\begin{equation*}
    K^Q\coloneq \int_{[0,1]}\prod_{j=1}^d G(t,x_j)e(-\la t)\eta_Q(t)dt.
\end{equation*}
Here, $G(t,x_j) = \sum_{k\in\Z{}}\gamma(k/N)e(kx_j+k^2t)$ is the 1-dimensional Schr\"odinger propagator on the square torus. We now seek to bound the size of $K^{Q}$ and the size of the Fourier transform of $K - K^{Q}$. To achieve the first task, we need a bound on $G(t,x)$ (we drop the subscript $i$ momentarily). A sharp bound can be proven for all time using classical methods, however, we also require an estimate that applies near fractions of very large denominator. 

\begin{proposition}\label[proposition]{2: kernelBoundProp}
Let $q\geq N$ be an odd integer. Let $t= \frac{a}{q}+\varphi$ where $(a,q)=1$ and $|\varphi|\lesssim\frac{1}{q^2}$. Then

\begin{equation}\label{2: kernelBoundEqn}
    |G(t,x)|\lesssim q^{1/2}.
\end{equation}
    
\end{proposition}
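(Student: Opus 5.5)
We would prove this with the classical Weyl-sum approach: complete the sum modulo $q$ via Poisson summation, then evaluate the resulting Gauss sums. Write $k = qm + r$ with $0\le r<q$. Then $e(k^2 a/q)=e(r^2a/q)$, and
\begin{equation*}
G(t,x)=\sum_{r=0}^{q-1} e\Bigl(\frac{ar^2}{q}\Bigr)\sum_{m\in\Z{}}\gamma\Bigl(\frac{qm+r}{N}\Bigr)e\bigl((qm+r)x+(qm+r)^2\varphi\bigr).
\end{equation*}
Applying Poisson summation in $m$ gives
\begin{equation*}
G(t,x)=\frac{1}{q}\sum_{\ell\in\Z{}} S(a,\ell;q)\, I_\ell,\qquad S(a,\ell;q)=\sum_{r\bmod q} e\Bigl(\frac{ar^2+\ell r}{q}\Bigr),
\end{equation*}
where
\begin{equation*}
I_\ell=\int_{\R{}}\gamma(y/N)\,e\bigl(y(x-\ell/q)+y^2\varphi\bigr)\,dy.
\end{equation*}
Since $q$ is odd and $(a,q)=1$, completing the square shows $|S(a,\ell;q)|= q^{1/2}$ for every $\ell$. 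This is the standard Gauss sum evaluation. We would use it as a known estimate, in the spirit of the exponential-sum facts deferred to \Cref{SectionExpSumEstimates}. Hence
\begin{equation*}
|G(t,x)|\le q^{-1/2}\sum_{\ell}|I_\ell|,
\end{equation*}
and it remains to show $\sum_\ell |I_\ell|\lesssim q$.

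For the oscillatory integrals, set $\xi_\ell = x-\ell/q$. The phase of $I_\ell$ is $y\xi_\ell + y^2\varphi$ on $|y|\lesssim N$, so its derivative is $\xi_\ell+2y\varphi$. Because $q\ge N$ and $|\varphi|\lesssim q^{-2}$, we have $|2y\varphi|\lesssim N/q^2\le 1/q$. The points $\xi_\ell$ are $1/q$-separated, so only $O(1)$ values of $\ell$ can have $|\xi_\ell|\lesssim C/q$. For these $\ell$ we use the trivial bound $|I_\ell|\lesssim N\le q$. For the remaining $\ell$, with $|\xi_\ell|\ge C/q$, the phase derivative is $\gtrsim|\xi_\ell|$ on the support. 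We integrate by parts repeatedly, using that the second derivative $2\varphi$ is tiny and that each derivative of $\gamma(y/N)$ gains $N^{-1}$. This gives
\begin{equation*}
|I_\ell|\lesssim_M N\,(N|\xi_\ell|)^{-M}.
\end{equation*}
Since $|\xi_\ell|\gtrsim |j|/q$ for the $j$-th such $\ell$, summing yields $\lesssim N\sum_{j\ge1}(Nj/q)^{-M}$.

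This last sum is the main obstacle. It is only favorable when $N|\xi_\ell|\gtrsim1$, and since $q$ may be as large as $N^2$, many $\ell$ have $|\xi_\ell|\le 1/N$. The clean fix is to combine the two bounds. The number of $\ell$ with $|\xi_\ell|\le 1/N$ is $\lesssim q/N+1$, and each contributes at most $N$, for a total of $\lesssim q$. The $\ell$ with $|\xi_\ell|\ge 1/N$ form dyadic shells $|\xi_\ell|\sim 2^jN^{-1}$, each containing $\lesssim 2^jq/N$ values. Each contributes $\lesssim N2^{-jM}$, so the shells sum to $\lesssim q$.

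The remaining technical points are:
\begin{itemize}
\item checking the integration-by-parts bound uniformly in $\varphi$, which needs $|\varphi| N\lesssim |\xi_\ell|$ in the nonstationary range;
\item dividing out the small stationary region, which is already covered by the trivial bound.
\end{itemize}
Together these give $\sum_\ell|I_\ell|\lesssim q$, and therefore $|G(t,x)|\lesssim q^{1/2}$.
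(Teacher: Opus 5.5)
Your proposal is correct and is essentially the paper's proof: Poisson summation modulo $q$ into Gauss sums times oscillatory integrals, and the square-root bound on the Gauss sum (you use the exact value $q^{1/2}$ for odd $q$, where the paper cites its normalized bound $q^{-1/2}$). Then, as in the paper, you use the trivial bound $N$ on the $\lesssim q/N$ frequencies with $|x-\ell/q|\lesssim N^{-1}$ and repeated integration by parts on the dyadic shells $|x-\ell/q|\sim 2^jN^{-1}$, summed as a geometric series. Your preliminary discussion at scale $1/q$ is unnecessary once you switch to this dyadic decomposition at scale $N^{-1}$.
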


Before we prove this, let us analyze $G(t,x)$. We will represent integers $k$ based on $q$, that is we shall write $k = rq+k_1$ for $0\leq k_1\leq q-1$. Poisson summation implies

    \begin{align*}
        G(t,x)& = \sum_{k_1=0}^{q-1}e(k_1^2a/q)\sum_{r\in\Z{}}\gamma(\frac{k_1+rq}{N})e((rq+k_1)x+(rq+k_1)^2\varphi)\\
        & = \sum_{m\in\Z{}}\bigPara{\frac{1}{q}\sum_{k_1=0}^{q-1}e(k_1^2a/q-k_1m/q)}\bigPara{\int_{\R{}}\gamma(y/N)e((x+\frac{m}{q})y+\varphi y^2)dy}\\
        &\coloneq \sum_{m\in\Z{}}S(a,m,q)J(x,\varphi,m,q).
    \end{align*}
We note that $S(a,m,q)$ is, up to the $q^{-1}$ factor, a generalized quadratic Gauss sum. This allows for the following estimate.

\begin{equation}\label{2: GaussSumBoundEqn}
    |S(a,m,q)|\lesssim q^{-1/2}.
\end{equation}

We defer the proof to \Cref{4: GaussSumBoundProp}. In this section we are restricted to $q$ prime, but in the following section we shall require bounds for all integers $q$. The above bounds holds for any integers $a,m$ and $q$ such that $(a,q)=1$.

We now return to the proof of the proposition.
\begin{proof}[Proof of \Cref{2: kernelBoundProp}]
   Fix $x\in\T{d}$. We have the following trivial estimate.

   \begin{equation*}
       |J(x,\varphi,m,q)|\lesssim N.
   \end{equation*}
   As we are considering times very close to a rational number, this estimate is superior to any van der Corput estimate. However, we will use (non) stationary phase to reduce the number of $m$ we sum over. Recall

   \begin{equation*}
       J(x,\varphi, m, q) =\int_{\R{}}\gamma(y/N)e((x+\frac{m}{q})y+\varphi y^2)dy.
   \end{equation*}
    Define $L(y) = 2\pi i[(x+\frac{m}{q})y+\varphi y^2]$. If $L'(y)=0$ for some $y$ in the domain, we will just use the trivial estimates. As $y$ is restricted to $B(0,cN)$, $|\varphi|\lesssim q^{-2}$, and $q\geq N$, we have $y\varphi\lesssim N^{-1}$. We now analyze the magnitude of $(x+\frac{m}{q})$. We have 

    \begin{equation*}
        |x+\frac{m}{q}|\lesssim N^{-1}
    \end{equation*}
    for $\lesssim q/N$ values of $m$. These will form the main contribution to our estimate. Now, if $|x+\frac{m}{q}|\sim 2^j N^{-1}$, this gives that

    \begin{equation*}
        |L'(y,N)|\sim 2^{j}N^{-1}.
    \end{equation*}
    Because of this, we may integrate by parts. Note

    \begin{equation*}
       J(x,\varphi, m, q) =\int_{\R{}}\gamma(y/N)\frac{1}{L'(y)}\frac{d}{dy}e^{L(y)}dy.
   \end{equation*}
   This gives

       \begin{equation*}
       J(x,\varphi, m, q) =-\int_{\R{}}\bigPara{\frac{\gamma'(y/N)}{NL'(y)}-\frac{\gamma(y/N)L''(y)}{(L'(y))^2}}e^{L(y)}dy.
   \end{equation*}
   The triangle inequality then gives

   $$|\frac{\gamma'(y/N)}{NL'(y)}-\frac{\gamma(y/N)L''(y)}{(L'(y))^2}|\lesssim 2^{-j}.$$
    This is because $|L''(y)|\lesssim N^{-2}$ and the estimates previously mentioned. Noting that $L^{(3)}(y)=0$, we may repeatedly integrate by parts, gaining a power of $2^{-j}$ each time. After $A$ iterations we get, for $m$ such that $|x+\frac{m}{q}|\sim 2^j N^{-1}$, that

    \begin{equation*}
        |J(x,m,\varphi,q)|\lesssim_A 2^{-jA}N.
    \end{equation*}
    There are $\lesssim 2^jqN^{-1}$ values of $m$ such that $|x+\frac{m}{q}|\sim 2^j N^{-1}$. This allows us to conclude by summing a geometric series and using \eqref{2: GaussSumBoundEqn}.

    \begin{align*}
        |\sum_{m\in\Z{}}S(a,m,q)J(x,\varphi,m,q)|&\lesssim q^{-1/2}\sum_{m\in\Z{}}|J(x,\varphi,m,q)| \\
        &\lesssim_A q^{-1/2}\bigPara{N\frac{q}{N}+\sum_{j}2^{-jA}2^{j}N\frac{q}{N}}.\\
        &\lesssim q^{1/2}.
    \end{align*}
    We have taken $A$ large enough in a manner not depending on $N$.
\end{proof}
\noindent
We now state two related lemmas that work for $q\leq N$. 

\begin{lemma}[Dirichlet's Lemma]\label{2: dirichlet'sLemma}
    For every $t\in[0,1]$, there exists a $2\leq q \leq N$ and $a$ with $(a,q)=1$ such that $t = \frac{a}{q}+\varphi$, where $|\varphi|\leq \frac{1}{Nq}$. If $t$ is near $0$ or $1$, we allow $a=0$ or $1$ respectively.
\end{lemma}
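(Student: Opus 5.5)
The approach is the classical pigeonhole argument behind Dirichlet's approximation theorem, applied with parameter $N$. Fix $t\in[0,1]$ and consider the $N+1$ real numbers $\{jt\}$, the fractional parts of $jt$, for $j=0,1,\dots,N$. Partition $[0,1)$ into $N$ half-open intervals of length $1/N$. By pigeonhole two of these numbers, say for indices $0\le j_1<j_2\le N$, lie in the same interval. Setting $q=j_2-j_1$, so that $1\le q\le N$, we get an integer $a$ with $|qt-a|<\frac1N$. Dividing by $q$ gives $t=\frac aq+\varphi$ with $|\varphi|<\frac{1}{Nq}$.

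Next we arrange $(a,q)=1$. If $g=(a,q)>1$, replace $a,q$ by $a/g,\,q/g$. Then $q$ decreases, and the bound $|\varphi|\le \frac{1}{Nq}$ only improves in the sense that $\frac{1}{N(q/g)}\ge \frac1{Nq}$, so the inequality persists. Also $0\le a\le q$ can be ensured because $t\in[0,1]$ and $|qt-a|<1/N\le 1$. One adjusts $a$ to the nearest integer to $qt$, which lies in $[0,q]$.

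The only real care, and the main obstacle, is the requirement $q\ge 2$ stated in the lemma. If the reduced denominator is $q=1$, then $a\in\{0,1\}$, so $t$ lies within $1/N$ of $0$ or $1$. This is exactly the exceptional case the statement permits, with $a=0$ or $1$. Alternatively, for such $t$ one can note that $t=\frac{1}{2}\cdot 2t$ style rewriting is unnecessary. We simply record $t=\frac{0}{1}+\varphi$ or $\frac11+\varphi$, and, if one insists on $q\ge 2$, write $\frac{0}{1}=\frac{0}{2}$ in the degenerate sense allowed by the last sentence of the lemma.

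Otherwise $2\le q\le N$, $(a,q)=1$ and $|\varphi|\le \frac1{Nq}$, as claimed. No earlier results of the paper are needed. Closedness of the endpoint $t=1$ is handled by the same $a=q$ (i.e. $a/q=1$) case.
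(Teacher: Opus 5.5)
Your pigeonhole argument on the fractional parts $\{jt\}$, $0\le j\le N$, followed by reducing $a/q$ to lowest terms, is correct. It is the classical proof the paper has in mind (the paper gives no proof and simply calls this lemma classical), and a reduced denominator $q=1$ is exactly the exceptional case $a\in\{0,1\}$ that the statement permits. Drop the aside about writing $\frac{0}{1}=\frac{0}{2}$: it violates $(a,q)=1$, and the exception clause already covers that case.
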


\begin{lemma}\label{2: propogatorDisperseiveLemma}
Let $1\leq a < q <N$ with $(a,q)=1$. For $t$ such that $|t- \frac{a}{q}|\leq \frac{1}{qN}$, we have that

\begin{equation}\label{2: propogatorDisperseiveEqn}
    |G(t,x)|\lesssim q^{-1/2}\min \{N, |t-\frac{a}{q}|^{-1/2}\}.
\end{equation}
\end{lemma}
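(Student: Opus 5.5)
We prove \eqref{2: propogatorDisperseiveEqn} using the same Poisson summation decomposition as in the proof of \Cref{2: kernelBoundProp}:
\begin{equation*}
G(t,x)=\sum_{m\in\Z{}}S(a,m,q)J(x,\varphi,m,q),\qquad \varphi=t-\frac{a}{q}.
\end{equation*}
By \eqref{2: GaussSumBoundEqn} we have $|S(a,m,q)|\lesssim q^{-1/2}$. This holds for general $q$ with $(a,q)=1$, by \Cref{4: GaussSumBoundProp}; for even $q$ some sums vanish and none exceed $q^{-1/2}$. The factor $q^{-1/2}$ therefore comes out uniformly. It remains to show
\begin{equation*}
\sum_m |J(x,\varphi,m,q)|\lesssim \min\{N,|\varphi|^{-1/2}\}.
\end{equation*}
The key difference from \Cref{2: kernelBoundProp} is that now $q<N$. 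The points $-m/q$ are spaced $1/q\geq 1/N$ apart, so only $O(1)$ values of $m$ will be essentially stationary. The remaining values will be handled by decay.

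\textbf{Estimate for a single $m$.} We have the trivial bound $|J|\lesssim N$. Since the phase $L(y)=2\pi i[(x+\frac mq)y+\varphi y^2]$ has $|L''|\sim|\varphi|$, the van der Corput second-derivative estimate gives $|J|\lesssim|\varphi|^{-1/2}$ for every $m$. Concretely, one integrates against the weight $\gamma(y/N)$ via the usual integration-by-parts form of van der Corput, and the weight has bounded variation. Hence $|J|\lesssim\min\{N,|\varphi|^{-1/2}\}$ for each $m$.

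\textbf{Decay in $m$.} The stationary point is $y_m=-(x+m/q)/(2\varphi)$. Since $|\varphi|\le 1/(qN)$ and $|y|\lesssim N$, we have $|\varphi y|\lesssim 1/q$ on the support of the weight. Now suppose $|x+m/q|\geq C/q$ for a large constant $C$. Then $|L'(y)|\gtrsim|x+m/q|$ on the support, and $|L'(y)|\gtrsim 1/q \geq 1/N$. Repeated integration by parts, exactly as in \Cref{2: kernelBoundProp}, then gives
\begin{equation*}
|J|\lesssim_A N\,(N|x+m/q|)^{-A}.
\end{equation*}
Each integration by parts gains a factor $(N|L'|)^{-1}$ from the $\gamma'$ term and $|\varphi|/|L'|^2\lesssim (N|L'|)^{-1}$ from the $L''$ term, using $|\varphi|\le 1/(qN)\le |L'|/N$. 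Only $O(C)$ values of $m$ have $|x+m/q|< C/q$. For these we use the bound from the previous paragraph.

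\textbf{Summing the tail.} The remaining $m$ have $|x+m/q|\sim 2^j/q$ with $2^j\geq C$. There are about $2^j$ of them, each with $|J|\lesssim N(2^jN/q)^{-A}\le N2^{-jA}$. The tail is therefore $\lesssim N2^{-jA}$ summed over such $j$, which is only $\lesssim N$.

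\textbf{Main obstacle.} The tail must also be bounded by $|\varphi|^{-1/2}$. Since $|\varphi|^{-1/2}\ge (qN)^{1/2}$, it suffices to make the tail $\lesssim (qN)^{1/2}$. Using $N/q\geq 1$ we get
\begin{equation*}
N(2^jN/q)^{-A}\le N(N/q)^{-1}2^{-jA}=q2^{-jA}\le (qN)^{1/2}2^{-jA},
\end{equation*}
taking $A\geq 1$. Summing over $j$ gives a tail $\lesssim(qN)^{1/2}\le\min\{N,|\varphi|^{-1/2}\}$, where the last inequality uses $q<N$. Combining this with the $O(1)$ near-stationary terms and the $q^{-1/2}$ factor gives \eqref{2: propogatorDisperseiveEqn}.
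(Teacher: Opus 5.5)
Your proof is correct and is essentially the argument the paper relies on. The paper formally defers to Bourgain's Lemma 3.18, but in the proof of \Cref{2: kernelBoundProp} and the sketch inside the proof of \Cref{3: KQsLinftyBoundProp} it runs exactly this scheme: the Poisson-summation decomposition into $S(a,m,q)J(x,\varphi,m,q)$, the uniform Gauss sum bound, the estimate $|J|\lesssim\min\{N,|\varphi|^{-1/2}\}$ for the $O(1)$ near-stationary $m$, and repeated integration by parts for the remaining $m$. There are two harmless slips: in the tail you drop the multiplicity $\sim 2^j$ of each dyadic shell, which is absorbed by taking $A\ge 2$; and for $q\equiv 2 \pmod 4$ the Gauss sums can exceed $q^{-1/2}$ by a factor $\sqrt{2}$ (e.g. $|S(1,1,2)|=1$), which the implicit constant absorbs.
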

See Lemma 3.18 in \cite{BourgainGAFALattice11993} for a proof of the second lemma, the first is classical. It is crucial that \eqref{2: kernelBoundEqn} and \eqref{2: propogatorDisperseiveEqn} do not have an $\eps$-loss, as we will be able to efficiently control the amount of time our function can be large. In particular, we can prove the following bound for $K^Q$ when $Q$ is very large. 

\begin{proposition}\label{2: KQSpaceBoundProp}
    Let $N\leq Q\leq N^2$ and $K^Q$ be as before. Then, for $d\geq 4$, the following $\log$ loss version of Proposition 4.2 in \cite{bourgainDemeterDiscreteRestrictImprovements2013} holds.

    \begin{equation}\label{2: KQSpaceBoundEqn}
        \lpn{K^Q}{\infty}\lesssim \frac{\log Q}{\log(NQ^{-1/2})} N^2 Q^{\frac{d-4}{2}}.
    \end{equation}
\end{proposition}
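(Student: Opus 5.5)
We bound $K^Q$ pointwise by putting absolute values inside the integral,
\begin{equation*}
|K^Q(x)|\leq c_Q\sum_{a/q\in R_Q}\int \eta((t-\tfrac aq)Q^2)\prod_{j=1}^d|G(t,x_j)|\,dt .
\end{equation*}
We then estimate $\prod_j|G(t,x_j)|$ separately on two regions of $t$. Every $t$ in the support of $\eta_Q$ satisfies $|t-\frac aq|\lesssim Q^{-2}\lesssim q^{-2}$ for some prime $q\in D_Q$, which is odd and satisfies $q\geq Q\geq N$. So \Cref{2: kernelBoundProp} applies and gives $|G(t,x_j)|\lesssim q^{1/2}\sim Q^{1/2}$. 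The difficulty is that the trivial bound
\begin{equation*}
c_Q\cdot\#R_Q\cdot Q^{-2}\cdot Q^{d/2}\sim Q^{d/2}
\end{equation*}
is far worse than the target $N^2Q^{(d-4)/2}$ when $Q\gg N$. We therefore use the bound $Q^{1/2}$ on only $d-4$ of the factors, and use \Cref{2: dirichlet'sLemma} and \Cref{2: propogatorDisperseiveLemma} to control the remaining four factors in an averaged sense.

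\textbf{Step 1: the four remaining factors.} After pulling out $Q^{(d-4)/2}$, it suffices to show
\begin{equation*}
c_Q\int \eta_Q^\flat(t)\prod_{j=1}^4|G(t,x_j)|\,dt\lesssim \frac{\log Q}{\log(NQ^{-1/2})}N^2,
\end{equation*}
where $\eta_Q^\flat=\sum_{a/q\in R_Q}\eta((t-\frac aq)Q^2)$. By Hölder in the four factors it suffices to treat $|G(t,x)|^4$ for a single $x$. Apply Dirichlet's lemma at scale $N$: each $t$ lies within $\frac{1}{sN}$ of some $b/s$ with $s\leq N$, and on that arc \Cref{2: propogatorDisperseiveLemma} gives
\begin{equation*}
|G(t,x)|^4\lesssim s^{-2}\min\{N^4,\beta^{-2}\},\qquad \beta=|t-b/s|.
\end{equation*}

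\textbf{Step 2: sum over major arcs.} The key input is the distribution of the prime-denominator fractions $a/q$ relative to the arcs around $b/s$. Since the $a/q$ are $Q^{-2}$-separated and distinct from $b/s$ (because $q>N\geq s$), we have $|a/q-b/s|\geq\frac{1}{qs}\gtrsim\frac{1}{sQ}$.

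\begin{itemize}
\item For fixed $b/s$, the points $a/q$ at distance $\sim\beta$ from $b/s$ number about $\beta Q^2$, and only $\beta\gtrsim(sQ)^{-1}$ occurs.
\item Each bump has width $Q^{-2}$, so the contribution of the arc around $b/s$ is bounded by
\begin{equation*}
\sum_{\beta\text{ dyadic},\ (sQ)^{-1}\lesssim\beta\lesssim (sN)^{-1}} \beta Q^2\cdot Q^{-2}\cdot s^{-2}\min\{N^4,\beta^{-2}\}.
\end{equation*}
\item This sum is dominated by $\beta\sim(sQ)^{-1}$ and equals about $s^{-2}\cdot sQ=Q/s$, provided $sQ\leq N^2$, i.e.\ $s\leq N^2/Q$.
\item Summing over $b$ (there are $s$ values) and then over $s$ gives roughly $\sum_s Q\lesssim Q\cdot N^2/Q=N^2$ in the range $s\leq N^2/Q$.
\end{itemize}

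For larger $s$ the minimum is $N^4$, and those arcs instead contribute about $\sum_s s\cdot s^{-2}N^4\cdot(sN)^{-1}\cdot(\text{density})$. The density of $\eta_Q^\flat$ on an interval longer than $Q^{-2}$ is about $1/\log Q$, because there are $Q^2/\log Q$ points. This density cancels $c_Q\sim\log Q$, which is the reason the normalisation $c_Q$ is harmless.

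\textbf{Main obstacle.} The delicate step is making the counting uniform over the dyadic ranges of $s$ and $\beta$ so that the losses add up to exactly $\frac{\log Q}{\log(NQ^{-1/2})}$ rather than a power of $N$. I expect this to go through Brun–Titchmarsh-type upper bounds: the number of primes $q\in[Q,2Q]$ with $a/q$ in a short interval around $b/s$ is controlled by counting solutions of $|as-bq|\leq h$ with $q$ prime. Such a count saves $\log$ of the interval's effective length. That effective length is about $NQ^{-1/2}$ in the critical range $s\sim Q^{1/2}$, where the two regimes meet, and this would produce the denominator $\log(NQ^{-1/2})$ against the numerator $c_Q\sim\log Q$. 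I would first try a sieve upper bound (Brun–Titchmarsh in arithmetic progressions modulo $s$) for this prime count, then sum the dyadic pieces.
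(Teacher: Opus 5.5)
\textbf{There is a genuine gap, in your Step 1.} Replacing $\prod_{j=5}^{d}|G(t,x_j)|$ pointwise by $Q^{(d-4)/2}$ on all of $\operatorname{supp}\eta_Q$ reduces you to an $L^4$ quantity. But $4$ is exactly the critical exponent for the distribution $|\{t:|G(t,x)|\geq 2^\sigma\}|\lesssim N^2 2^{-4\sigma}$: every dyadic level $\sqrt{N}\lesssim 2^\sigma\lesssim \sqrt{Q}$ contributes the same amount. As a result, the four-factor inequality you reduce to is false in the middle range of $Q$.

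Take $x=0$ and $Q=N^{3/2}$. For odd $s\ll N$ and $|t-b/s|\ll N^{-2}$ one has $|G(t,0)|\sim N s^{-1/2}$. For $s\gtrsim N^2/Q$, these windows carry $\eta_Q$-mass $\sim \phi(s)N^{-2}$, summed over $b$ and averaged over $s\sim S$; here the prime density $1/\log Q$ cancels $c_Q$. Hence each dyadic block $N^2/Q\lesssim S\lesssim N$ contributes $\sim N^2$, and
$c_Q\int \eta_Q^\flat |G(t,0)|^4\,dt\gtrsim N^2\log(Q/N)\sim N^2\log N$.
Your target $\frac{\log Q}{\log(NQ^{-1/2})}N^2$ equals $6N^2$ here.

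This is precisely the range $s>N^2/Q$ that your Step 2 left unfinished. Each arc there gives about $s^{-2}N^2$ once density and $c_Q$ cancel, and summing $N^2/s$ over $N^2/Q<s\le N$ produces $\log(Q/N)$. This loss has nothing to do with the prime count, so no Brun--Titchmarsh input can remove it.

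\textbf{How the paper avoids this.} It never splits off $d-4$ factors. After H\"older it decomposes $\int |G(t,x)|^d\eta_Q(t)\,dt$ into level sets. Dirichlet's lemma and Lemma~\ref{2: propogatorDisperseiveLemma} give $|\{|G|\geq 2^\sigma\}|\lesssim N^2 2^{-4\sigma}$, and Proposition~\ref{2: kernelBoundProp} is used only to truncate at $2^\sigma\lesssim\sqrt{Q}$. Because $d>4$, the sum $\sum 2^{\sigma(d-4)}N^2$ is geometric and dominated by the top level. The extra $1/\log(NQ^{-1/2})$ in the paper comes from restricting that level-set count to prime denominators; you do not need it.

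\textbf{Repairing your argument.} Your arc counting works if you keep all $d$ powers, i.e.\ use $s^{-d/2}\min\{N,\beta^{-1/2}\}^d$. The sums over $\beta$ and $s$ then become geometric and are dominated by $s\sim N^2/Q$, $\beta\sim N^{-2}$, where $|G|\sim\sqrt{Q}$. This, not $s\sim Q^{1/2}$, is where your two regimes meet. No sieve is needed either. For fixed $s$ and $h=as-bq\neq 0$, each prime $q\in D_Q$ determines $b$ modulo $s$. So summing over $b$ gives at most $\#D_Q\lesssim Q/\log Q$ fractions per $h$, which cancels $c_Q$ exactly. This yields $\|K^Q\|_\infty\lesssim N^2Q^{(d-4)/2}$ for $d\geq 5$, which implies the stated bound.

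For $d=4$ there is nothing to split off. The example above shows that no argument putting absolute values inside the $t$-integral can reach the stated bound there. The paper's proof does exactly that, so it does not cover $d=4$ either.
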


\begin{proof}
    Observing \eqref{2: propogatorDisperseiveEqn}, if $|G(t,x)|\gtrsim 2^s$, we immediately have 

    \begin{align*}
        &q\lesssim N^22^{-2s},\\
        &|t-\frac{a}{q}|\lesssim q^{-1}2^{-2s}.
    \end{align*}
    So the integrand of $K^Q$ can only be this large in certain neighborhoods of times that can be written as $\frac{a}{q}$ where $(a,q)=1$ and $q$ prime. A direct calculation shows

    \begin{equation*}
        |\{t\in[0,1]:|G(t,x)|\geq 2^s\}|\lesssim \sum_{q\lesssim N^2 2^{-2s}}\frac{\phi(q)}{q2^{2s}}\lesssim N^{2}2^{-4s}(\log(\frac{N}{2^s}))^{-1}.
    \end{equation*}
    Here, $\phi(q)$ is the Euler totient function, which counts the number of integers less than $q$ and coprime to it. The conclusion of \Cref{2: kernelBoundProp} says that, on the support of $\eta_Q$, we have

    \begin{equation*}
        |G(t,x)|\lesssim \sqrt{Q},
    \end{equation*}
    as we have localized to small neighborhoods are rationals with large denominators. We then immediately have via H\"older's inequality and our normalization constant $c_Q$ that

    \begin{align*}
        \lpnorm{G(t,x)}{d}{\supp \eta_Q}^d&\lesssim \log Q\,N^2\sum_{N^{d/2}\leq 2^s \lesssim \sqrt{Q}}(\log(\frac{N}{2^s}))^{-1}2^{s(d-4)} + \log QN^{\frac{d}{2}}\\
        &\lesssim \frac{\log Q}{\log(N Q^{-1/2})}\, N^2Q^{\frac{d-4}{2}}.
    \end{align*}
\end{proof}
It seems difficult to get around the $\log Q$ loss here as we will eventually take $Q= N^2$ in some regimes. The proof of the next proposition will illustrate another $\log Q$ loss. To continue, we prove that the Fourier transform of $K-K^Q$ is small.

\begin{proposition}\label{2: KerrFourierBoundProp}
    For any $d\geq 1$ and $N\leq Q\leq N^2$, we have

    \begin{equation}\label{2: KerrFourierBoundEqn}
        \lpn{\mathcal{F}(K-K^Q)}{\infty}\lesssim \log Q\,Q^{-1}
    \end{equation}
\end{proposition}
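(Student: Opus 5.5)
The plan is to compute the Fourier coefficients of $K-K^Q$ exactly and reduce the bound to a statement about the Fourier coefficients of the weight $1-\eta_Q$ on $[0,1]$.

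\textbf{Step 1: explicit Fourier coefficients.} Expanding the product of the $G(t,x_j)$ gives
\begin{equation*}
    K^Q(x) = \sum_{\mathbf{k}\in\Z{d}}\Bigl(\prod_{j=1}^d\gamma(k_j/N)\Bigr)\,\hat{\eta}_Q\bigl(\la-|\mathbf{k}|^2\bigr)\,e(\mathbf{k}\cdot x),
\end{equation*}
where $\hat{\eta}_Q(n)=\int_{[0,1]}\eta_Q(t)e(-nt)\,dt$ is the $n$-th Fourier coefficient of the $1$-periodic function $\eta_Q$. Similarly $K$ is given by the same formula with $\eta_Q$ replaced by $1$, that is, with $\hat\eta_Q(n)$ replaced by $\delta_{n,0}$. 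We may assume $\gamma\equiv 1$ on the relevant ball, so the $\gamma$ factor equals $1$ when $|\mathbf{k}|=N$.

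By the normalization $\int\eta_Q=1$, we get $\hat\eta_Q(0)=1$. Hence $\mathcal{F}(K-K^Q)(\mathbf{k})=0$ on the sphere, and
\begin{equation*}
    |\mathcal{F}(K-K^Q)(\mathbf{k})| \le \|\gamma\|_\infty^d\,|\hat\eta_Q(\la-|\mathbf{k}|^2)|
\end{equation*}
otherwise. So it suffices to show $|\hat\eta_Q(n)|\lesssim \log Q\,Q^{-1}$ for every integer $n\neq 0$.

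\textbf{Step 2: reduce to a Ramanujan-sum estimate.} Since $\eta_Q$ is a sum of translates,
\begin{equation*}
    \hat\eta_Q(n) = c_Q\,Q^{-2}\,\hat\eta(n/Q^2)\sum_{q\in D_Q}\;\sum_{(a,q)=1} e(-an/q).
\end{equation*}
The inner sum is the Ramanujan sum $c_q(n)$. Because $q$ is prime, $c_q(n)=q-1$ if $q\mid n$ and $c_q(n)=-1$ otherwise. Also $|\hat\eta|\lesssim 1$ and $c_Q\sim\log Q$, so
\begin{equation*}
    |\hat\eta_Q(n)| \lesssim \log Q\,Q^{-2}\Bigl(\#D_Q + Q\cdot\#\{q\in D_Q : q\mid n\}\Bigr).
\end{equation*}
The first term gives $\log Q\,Q^{-2}\cdot Q/\log Q = Q^{-1}$, which is harmless.

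\textbf{Step 3 (main obstacle): counting prime divisors of $n$ in $[Q,2Q]$.} This count must be uniform in $n$. If $|n|\le Q^{A}$ for a fixed $A$, then $n$ has at most $A$ prime divisors exceeding $Q$, so the second term is $\lesssim \log Q\,Q^{-1}$.

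For very large $|n|$ I would instead use the rapid decay of $\hat\eta(n/Q^2)$. For $|n|\ge Q^{3}$ we have $|\hat\eta(n/Q^2)|\lesssim_M (Q^2/|n|)^M$. Combined with the trivial bound of $Q$ on the number of primes in $D_Q$, this makes the whole sum negligible.

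We only need $|\mathbf{k}|\lesssim N$ because of the support of $\gamma$, so in fact $|n|\lesssim N^2\le Q^2$ and the first regime already suffices. Either way we obtain $|\hat\eta_Q(n)|\lesssim \log Q\,Q^{-1}$. This is exactly where restricting to prime denominators pays off: it turns the Ramanujan sums into $-1$ except on at most $O(1)$ divisors. The single $\log Q$ loss comes from the normalization constant $c_Q$.
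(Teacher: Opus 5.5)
Your proposal is correct and follows the paper's argument. The steps match: the coefficient vanishes on the sphere because $\int\eta_Q=1$, the Ramanujan sums over prime denominators equal $-1$ or $q-1$, and there is a bounded count of prime divisors of $l$ in $[Q,2Q]$, with rapid decay of $\hat\eta$ handling large $l$. The paper splits by the number $S$ of such divisors rather than by the size of $l$, which is equivalent. Your remark that the compact support of $\gamma$ forces $|l|\lesssim N^2\le Q^2$, so the large-$l$ regime is not needed, is a valid minor simplification.
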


\begin{proof}
    We compute the Fourier coefficient of $K-K^Q$ as

    \begin{equation*}
        \mathcal{F}(K-K^Q)(\mathbf{k}) = \int_{\T{d}}\int_{[0,1]}\prod_{j=1}^dG(t,x_j)\bigPara{1-\eta_Q(t)} e(-\la t)e(-\mathbf{k}\cdot x)dtdx.
    \end{equation*}
    After exchanging the order of integration, we see that the integral with respect to $x$ is only non-zero when $\mathbf{k} = (k_1,...,k_d)$ where $k_j$ come from the sum in the definition of $G(t,x_j)$. This then leaves us with only a Fourier transform with respect to time, and so 

    \begin{equation*}
        \mathcal{F}(K-K^Q)(\mathbf{k}) = \widehat{1-\eta_Q}(|\mathbf{k}|^2-\la)\prod_{j=1}^d\gamma(\frac{k_j}{N}).
    \end{equation*}
    Define $l = |\mathbf{k}|^2-\la$. The bump functions do not alter our conclusions as they equal $1$ if $l = 0$. As $1-\eta_Q(t)$ has mean zero, we get

    \begin{equation*}
        \mathcal{F}(K-K^Q)(\mathbf{k}) = 0 \text{ if } |\mathbf{k}|^2=\la.
    \end{equation*}
    If $l\neq 0$, the constant term drops out and we compute the Fourier transform of $\eta_Q$. Recalling \eqref{2: etaQDef}, we have

    \begin{equation}\label{2: FourierTransNumberThrBoundEqn}
        \widehat{1-\eta_Q}(l) \sim c_Q Q^{-2}\hat{\eta}(\frac{l}{10Q^2})\sum_{q\in R_Q}\sum_{a=1}^{q-1}e(la/q).
    \end{equation}
    This follows from basic properties of the Fourier transform. Recall the $c_Q$ factor is $\sim \log Q$. As the sum is over roots of unity, the inner sum is $-1$ if $q$ does not divide $l$ and $q-1$ otherwise. Therefore, we have

    \begin{equation*}
        \widehat{1-\eta_Q}(l) \sim \log Q\, Q^{-2}\hat{\eta}(\frac{l}{10Q^2})\bigPara{-\#D_Q+\sum_{q\in D_Q,\, q|l}q}.
    \end{equation*}
    We have $\#D_Q\sim Q (\log Q)^{-1}$ and $q\sim Q$. Suppose $l$ has less than 10 factors in $D_Q$. Then the triangle equality immediately implies that

    \begin{equation*}
        |\widehat{1-\eta_Q}(l)| \sim \log Q\,Q^{-2}|\hat{\eta}(\frac{l}{10Q^2})|\cdot 10 Q \lesssim \log Q\,Q^{-1}.
    \end{equation*}
    Now suppose $l$ has $S$ factors with $S\geq 10$. Then $l\gtrsim Q^{S}$. However, $\eta$ is Schwartz and so enjoys rapid decay.

    \begin{equation*}
        |\hat{\eta}(z)|\lesssim (1+|z|)^{-100}.
    \end{equation*}
    Therefore $|\hat{\eta}(\frac{l}{10Q^2})|\lesssim Q^{-100\cdot(S-2)}$. The triangle inequality then gives

    \begin{equation*}
        |\widehat{1-\eta_Q}(l)| \lesssim \log Q\,Q^{-2}Q^{-100\cdot(S-2)} S Q\lesssim \log Q Q^{-1}, \, S\geq 10.
    \end{equation*}
    So in any case, we conclude.
\end{proof}

\subsection{Proof of $\log$-loss result}
We now prove \Cref{1: logMainResultThm}. Following \cite{bourgainDemeterDiscreteRestrictImprovements2013}, assume $\|a_\xi\|_{\ell^2(\mathcal{F}_{d,\la})}=1$. Define

$$F(x) = \sum_{\xi\in \mathcal{F}_{d,N}}a_\xi e(x\cdot \xi).$$
Recall the superlevel sets $E_\alpha$ for this function are given by

$$E_\alpha = \{x\in\T{d}: |F(x)|>\alpha\}.$$
This allows us to define the oscillatory indicator functions as

\begin{equation*}
    f(x)=\frac{F(x)}{|F(x)|}1_{E_{\alpha}}(x).
\end{equation*}
These functions have the following crucial property, which is trivially verified.

\begin{equation*}
    \int_{\T{d}}|f(x)|^pdx = |E_\alpha|.
\end{equation*}
By the definitions of these functions, Plancherel's Theorem, and Cauchy-Schwarz, we have

\begin{equation*}
    \alpha |E_\alpha|\leq \int_{\T{d}}\bar{F}(x)f(x)dx = \sum_{\xi\in \mathcal{F}_{d,N}}\bar{a}_\xi \mathcal{F}(f)(\xi),
\end{equation*}
which implies

\begin{equation*}
        \alpha^2 |E_\alpha|^2\leq \sum_{\xi\in \mathcal{F}_{d,N}}|\mathcal{F}(f)(\xi)|^2 = \langle K*f,f \rangle.
\end{equation*}
From Proposition 4.1 in \cite{bourgainDemeterDiscreteRestrictImprovements2013}, we also have that

\begin{equation}\label{2: importedKQBoundSmallHeightEqn}
    \lpn{K^Q}{\infty}\lesssim Q^{\frac{d-1}{2}+\eps}.
\end{equation}
Recall, we can split our kernel as

\begin{equation*}
    K = K^{Q}+(K-K^{Q}).
\end{equation*}
This gives

\begin{align*}
   | \langle K*f,f \rangle| & = |\langle K^Q*f,f \rangle  + \langle (K-K^Q)*f,f \rangle|\\
     & \leq \lpn{K^Q}{\infty}\langle \lpn{f}{1}, |f| \rangle  + \lpn{\widehat{K-K^Q}}{\infty}\langle |\hat{f}|,|\hat{f}| \rangle\\
     &\leq\lpn{K^Q}{\infty}|E_\alpha|^2  + \lpn{\widehat{K-K^Q}}{\infty}|E_\alpha|.
\end{align*}
We now optimize in $Q$ using \eqref{2: importedKQBoundSmallHeightEqn} or \Cref{2: KQSpaceBoundProp}, with the goal of making $\lpn{K^Q}{\infty}\leq \frac{1}{2}\alpha^2$. Having done this, we get

\begin{equation*}
    |E_\alpha|\lesssim \log Q\,\alpha^{-2}Q^{-1}.
\end{equation*}
Where $\alpha$ and $Q$ are now related by $\alpha^2\sim \log Q\,(\log (N/\sqrt{Q}))^{-1}\,N^2Q^{\frac{d-4}{2}}$ or $\alpha^2\sim Q^{\frac{d-1}{2}+\eps}$.

This yields for all $d\geq 5$

\begin{equation}\label{2: sharpLevelSetEstimatesEqun}
    |E_\alpha|\lesssim \begin{cases}
        \quad N^\epsilon \alpha^{-\frac{2(d+1)}{d-1}} &\text{ if }N^{\frac{d-1}{4}}\lesssim \alpha \lesssim N^{\frac{d-1}{3}+\eps_0}\\
        \quad (\log N)^{1+\frac{2}{d-4}} N^{\frac{4}{d-4}}\alpha^{-\frac{2d-4}{d-4}} &\text{ if }\alpha\gtrsim N^{\frac{d-1}{3}+\eps_0}
    \end{cases}.
\end{equation}
Here, $\eps_0>0$ is arbitrary. The bound for large heights can be improved to just a $\log N$ factor if $\alpha\lesssim N^{\frac{d-2}{2}-\eps_0}$. Note that the bound \eqref{2: KQSpaceBoundEqn} is vacuous if $d\leq 4$. The bound $|E_\alpha|\lesssim N^\epsilon \alpha^{-\frac{2(d+1)}{d-1}}$ actually holds for all level sets as a consequence of the conjecture being proven by decoupling when $p\leq 2(d+1)/(d-1)$. We can thus view this result as improving the level set bound for larger heights.

It is a standard fact that

\begin{equation*}
    \lpnorm{F}{p}{\T{d}}^p = \int_0^{\lpn{F}{\infty}}\alpha^{p-1}|E_\alpha|d\alpha.
\end{equation*}
The torus being the domain of integration does not factor into the above equality. We use \eqref{2: sharpLevelSetEstimatesEqun} and compute

\begin{align*}
    \int_0^{N^{\frac{d-1}{3}}}\alpha^{p-1}|E_\alpha|d\alpha&
    \lesssim N^\epsilon\int_0^{N^{\frac{d-1}{3}}}\alpha^{p-1}\alpha^{-\frac{2(d+1)}{d-1}}d\alpha\\
    &\lesssim N^\eps\alpha^{p-\frac{2(d+1)}{d-1}}|_0^{N^{\frac{d-1}{3}}}\\
    &\lesssim N^\epsilon N^{\frac{d-1}{3}(p-\frac{2(d+1)}{d-1})}.
\end{align*}
Which is $\lesssim N^{\frac{p(d-2)}{2}-d}$ when $p>\frac{2d}{d-3}$ and $d\geq 6$ or $p>6$ when $d=5$. We use our second bound for the large heights. If $d\geq 5$, we have

\begin{align*}
    \int_{N^{\frac{d-1}{3}}}^{N^{\frac{d-2}{2}}}\alpha^{p-1}|E_\alpha|d\alpha&
    \lesssim (\log N)^{1+\frac{2}{d-4}}\,N^{\frac{4}{d-4}}\int_{N^{\frac{d-1}{3}}}^{N^{\frac{d-2}{2}}}\alpha^{p-1}\alpha^{-\frac{2d-4}{d-4}}d\alpha\\
    &\lesssim (\log N)^{1+\frac{2}{d-4}}\,N^{\frac{4}{d-4}}\alpha^{p-\frac{2d-4}{d-4}}|_{N^{\frac{d-1}{3}}}^{N^{\frac{d-2}{2}}}\\
    &\lesssim (\log N)^{1+\frac{2}{d-4}}\,N^{\frac{4}{d-4}} N^{\frac{d-2}{2}(p-\frac{2d-4}{d-4})}\\
    &\sim (\log N)^{1+\frac{2}{d-4}}\,N^{\frac{p(d-2)}{2}-d}.
\end{align*}
And so we are done.
\section{A sharp argument}\label{Section3}
\noindent
In this section, we prove \Cref{1: SharpMainResultThm}.

\subsection{Preliminary estimates}
The convolution kernel (on $\T{d}$) of $P_N$ is given by

\begin{equation}\label{3: kernel1stDefEqn}
    K(x) \coloneq \sum_{\mathbf{k}\in\mathcal{F}_{d,N}}e(\mathbf{k}\cdot x).
\end{equation}
Taking inspiration from the circle method from analytic number theory, we replace this discrete sum with a more continuous equivalent. It is a straightforward consequence of periodicity that

\begin{equation}\label{3: kernelDefEqn}
    K(x) = \int_{[0,1]}\prod_{j=1}^dG(t,x_j) e(-\la t)dt,
\end{equation}
where

\begin{equation}\label{3: SchroPropG(t,x)DefEqn}
    G(t,x_j) = \sum_{k\in\Z{}}\gamma(k/N)e(kx_j+k^2t).
\end{equation}
The function $\gamma$ is a Schwartz bump function such that $ 1_{[-1,1]}\leq \gamma$. Because of the cancellation occurring in the time integral, the only essential property of this function is that it is 1 when $k\sim N$. Importantly, $G(t,x_j)$ in its given form is the Schr\"odinger propagator on the circle. Additionally, we view the integral in \eqref{3: kernelDefEqn} as being over (a rescaled version of) $S^1$ which has been identified with $[0,1]$ in the obvious way.

The motivating insight from the circle method is that any expression of the form \eqref{3: kernelDefEqn} should be large when $t$ is near a rational with small denominator. We shall quantify this intuition and use it to obtain bounds on $K(x)$, which we shall then convert to $L^p$ norms of eigenfunctions.

In this section we use different definitions for parameters as compared to \Cref{Section2}. Let $2\leq Q <N/100$ be dyadic. We also define the dyadic $2^s$ by $Q<2^s <N$. Let $\eta$ be a smooth bump function identically $1$ on $\frac{1}{4}\leq |t|\leq \frac{1}{2}$ and supported in $\frac{1}{8}\leq |t|\leq 1$. We define

$$R_Q = \{\frac{a}{q}: Q\leq q< 2Q, (a,q)=1\},$$
and

\begin{equation}\label{3: etaQDef}
    \eta_{Q,s}(t) = \sum_{a/q\in R_{Q}}\eta((t-a/q)N2^s).
\end{equation}
Note we place no restriction on $q$ other then it must be $\sim Q$. Clearly, $\#R_Q\sim Q^2$. Our definition of $\eta$ implies

$$\supp \eta_{Q,s} = \{t:|t-\frac{a}{q}|\sim \frac{1}{N2^s}, \frac{a}{q}\in R_Q\}.$$
When $2^s\sim N$, we modify the bump function $\eta$ so that $\eta_{Q,s}$ is supported on $|t-\frac{a}{q}|\lesssim N^{-2}$. This will not the affect bulk of our analysis. It is trivially verified that the supports of $\eta_{Q,s}$ do not overlap. 

We now split the kernel into three pieces. For the first, let $\eta_0$ be an appropriate compactly supported bump function that is $1$ near $0$. We define

\begin{align*}
    &K_0 \coloneq \int_{[0,1]}\prod_{j=1}^d G(t,x_j)e(-\la t)\eta_0(N^2 t)dt,\\
    &K_{Q,s} \coloneq \int_{[0,1]}\prod_{j=1}^d G(t,x_j)e(-\la t)\eta_{Q,s}(t)dt,\\
    &K_{err} \coloneq K - K_0 - \sum_{Q,s}K_{Q,s}.
\end{align*}
The first piece of the kernel is supported only around $t=0$. We could alter the lower bound on $Q$ to include this portion of the domain in the definition of the $K_{Q,s}$, but we will present a simpler and distinct argument for $K_0$. We recall that we identify $[0,1]$ with $S^1$ to make sense of this definition and the dyadic arcs around $0$. The cutoff function $\eta_{Q,s}$ is supported in time near rationals with denominator $\sim Q$ at a distance $\sim (N2^s)^{-1}$. The error $K_{err}$ is defined by a cutoff supported near times that cannot be well approximated by rationals of denominator $\lesssim N$. 

We recall two key inputs from number theory, Dirichlet's Lemma and the sharp bound for the Schr\"odinger propagator for the torus.

\begin{lemma}[Dirichlet's Lemma]\label{3: dirichlet'sLemma}
    For every $t\in[0,1]$, there exists a $2\leq q \leq N$ and $a$ with $(a,q)=1$ such that $t = \frac{a}{q}+\varphi$, where $|\varphi|\leq \frac{1}{Nq}$. If $t$ is near $0$ or $1$, we allow $a=0$ or $1$ respectively.
\end{lemma}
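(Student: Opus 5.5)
The statement to prove is Dirichlet's approximation theorem, restated with denominators at most $N$. The plan is the classical pigeonhole argument, followed by a short adjustment to handle the lower bound $q\geq 2$ and the endpoint conventions.

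\emph{Step 1: pigeonhole.} Fix $t\in[0,1]$ and consider the $N+1$ fractional parts $\{jt\}$ for $j=0,1,\dots,N$. Partition $[0,1)$ into $N$ half-open intervals of length $1/N$. By pigeonhole, two of these points, say for indices $0\leq j_1<j_2\leq N$, land in the same interval. Set $q'=j_2-j_1$, so $1\leq q'\leq N$. Then there is an integer $a'$ with $|q't-a'|<\frac{1}{N}$. Dividing out $g=(a',q')$ gives $q=q'/g$ and $a=a'/g$ with $(a,q)=1$, and
\begin{equation*}
|t-\tfrac{a}{q}|=\frac{|q't-a'|}{q'}<\frac{1}{Nq'}\leq\frac{1}{Nq}.
\end{equation*}
This already yields $|\varphi|\leq\frac{1}{Nq}$ with $1\leq q\leq N$.

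\emph{Step 2: the case $q=1$.} Here $a/q\in\{0,1\}$ and $|t|<1/N$ or $|t-1|<1/N$. This is exactly the situation the statement permits through the clause ``if $t$ is near $0$ or $1$, we allow $a=0$ or $1$''. So in this case we read the conclusion with $a=0$ or $a=1$. If one insists on $q\geq2$, one can instead use the approximation $0/q$ or $q/q$ for any $q\leq N$, since then $|\varphi|\leq 1/N$ trivially covers only $|t|\leq\frac{1}{Nq}$. This is why the exceptional clause is needed, and I would simply invoke it.

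\emph{Main obstacle.} The only delicate point is this bookkeeping at the endpoints. The interpretation of $[0,1]$ as $S^1$ makes $t$ near $0$ and $t$ near $1$ the same case, so the argument is otherwise routine.
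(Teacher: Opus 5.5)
Your pigeonhole argument is correct and is exactly the classical proof the paper defers to (the paper gives no proof, only calling the lemma classical). The outcome $q=1$ corresponds to the exceptional clause, read as permitting the fractions $0/1$ and $1/1$ when $|t|<1/N$ or $|1-t|<1/N$. Your aside about using $0/q$ or $q/q$ with $q\geq 2$ is muddled and should be dropped, since those fractions are not in lowest terms, but your proof does not depend on it.
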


\begin{lemma}\label{3: propogatorDisperseiveLemma}
Let $1\leq a < q <N$ with $(a,q)=1$. For $t$ such that $|t- \frac{a}{q}|\leq \frac{1}{qN}$ we have that

\begin{equation}\label{3: propogatorDisperseiveEqn}
    |G(t,x)|\lesssim q^{-1/2}\min \{N, |t-\frac{a}{q}|^{-1/2}\}.
\end{equation}
\end{lemma}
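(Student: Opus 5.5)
The plan is to reuse the Poisson summation decomposition from the proof of \Cref{2: kernelBoundProp}. Write $t=\frac{a}{q}+\varphi$ with $|\varphi|\leq \frac{1}{qN}$ and $k=rq+k_1$. Exactly as in \Cref{Section2},
\begin{equation*}
G(t,x)=\sum_{m\in\Z{}}S(a,m,q)J(x,\varphi,m,q),\qquad J(x,\varphi,m,q)=\int_{\R{}}\gamma(y/N)e\bigl((x+\tfrac{m}{q})y+\varphi y^2\bigr)dy .
\end{equation*}
By \eqref{2: GaussSumBoundEqn}, which holds for all $q$ with $(a,q)=1$, we have $|S(a,m,q)|\lesssim q^{-1/2}$. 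It therefore suffices to prove
\begin{equation*}
\sum_{m\in\Z{}}|J(x,\varphi,m,q)|\lesssim \min\{N,|\varphi|^{-1/2}\}.
\end{equation*}

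\textbf{Near terms.} Let $m_0$ minimize $|x+\frac{m}{q}|$. Since $q<N$, the points $-m/q$ are at least $1/q> 1/N$ apart. For each $m$ with $|m-m_0|\leq 2$, I bound $J$ by $\min\{N,|\varphi|^{-1/2}\}$. The bound $N$ is the trivial one from the support of $\gamma(\cdot/N)$. The bound $|\varphi|^{-1/2}$ is van der Corput's second derivative test, since the phase has second derivative $2\varphi$ and the amplitude has bounded variation. There are $O(1)$ such terms.

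\textbf{Far terms.} For $|m-m_0|\geq 2$ the point $-m/q$ sits at distance $\gtrsim |m-m_0|/q$ from $x$. The linear part of the phase derivative $L'(y)=2\pi i[(x+\frac mq)+2\varphi y]$ then dominates the quadratic part. Indeed, on the support $|y|\lesssim N$ we have $|\varphi y|\lesssim N|\varphi|\leq 1/q$, so after fixing the support constant of $\gamma$ (or splitting off a few more near terms) we get $|L'(y)|\gtrsim |m-m_0|/q$. Furthermore $|L''|N^2\sim |\varphi|N^2\leq N/q\lesssim N|L'|$. Integrating by parts $A$ times as in the proof of \Cref{2: kernelBoundProp}, each step gains a factor $(N|L'|)^{-1}$ from differentiating either $\gamma(\cdot/N)$ or $1/L'$. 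This gives
\begin{equation*}
|J|\lesssim_A N\Bigl(\frac{q}{N|m-m_0|}\Bigr)^{A}.
\end{equation*}
Summing over $m$ with $A=2$ yields a contribution $\lesssim N(q/N)^2=q^2/N$. This is at most $N$ because $q<N$, and at most $q\leq (qN)^{1/2}\leq|\varphi|^{-1/2}$, again using $q<N$ and $|\varphi|\le (qN)^{-1}$. Combining the near and far terms with the Gauss sum bound gives \eqref{3: propogatorDisperseiveEqn}.

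\textbf{Main obstacle.} The delicate step is showing that only $O(1)$ values of $m$ can be stationary or nearly stationary, uniformly in $x$. This is exactly where both hypotheses $q<N$ and $|\varphi|\leq (qN)^{-1}$ enter. Together they make the drift $2\varphi y$ of the stationary point across the support (of size $\le 2N|\varphi|\le 2/q$) comparable to at most one spacing $1/q$. If this bookkeeping with the support constant of $\gamma$ is awkward, I would instead partition $m$ dyadically by $|x+m/q|\sim 2^j/q$, as in the proof of \Cref{2: kernelBoundProp}, and sum the resulting geometric series.
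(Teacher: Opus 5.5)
Your proposal is correct and is essentially the paper's approach. The paper only cites Lemma 3.18 of Bourgain for this statement, but it runs the same argument in the proofs of \Cref{2: kernelBoundProp} and \Cref{3: KQsLinftyBoundProp}: Poisson summation into $\sum_m S(a,m,q)J(x,\varphi,m,q)$, the uniform bound $|S(a,m,q)|\lesssim q^{-1/2}$, the bound $\min\{N,|\varphi|^{-1/2}\}$ for the $O(1)$ nearly stationary $m$, and repeated integration by parts for the remaining $m$. As you anticipated, the near/far cutoff in $m$ must depend on the support radius $c$ of $\gamma$, since the drift $2\varphi y$ can reach $2c/q$ and $c>1$ because $\gamma\geq 1_{[-1,1]}$ is smooth; with that fixed, your check that the far-term total $q^2/N$ lies below both $N$ and $|\varphi|^{-1/2}$ goes through as written.
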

See Lemma 3.18 in \cite{BourgainGAFALattice11993} for a proof of the second lemma, the first is classical. The estimate for $K_0$ is an instance of a more general phenomena for local operators.

\begin{proposition}\label{3: K0localBoundProp}
Let $d\geq 5$ and $2\leq p \leq \infty$. Let $K_0$ be as before. Then, for all $F\in L^{p'}$, the H\"older dual of $L^p$, we have

\begin{equation}\label{3: K0localBoundEqn}
    \lpnorm{K_0* F}{p}{\T{d}}\lesssim N^{d-2-\frac{2d}{p}}\lpnorm{F}{p'}{\T{d}}.
\end{equation}
\end{proposition}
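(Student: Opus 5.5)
We interpolate between an $L^1\to L^\infty$ bound and an $L^2\to L^2$ bound for the operator $F\mapsto K_0*F$. Riesz--Thorin with $\theta = 2/p$ (the $L^2$ endpoint weighted by $2/p$) then gives
\[
\|K_0*F\|_{p}\lesssim A^{1-\frac{2}{p}}B^{\frac{2}{p}}\|F\|_{p'},
\]
where $A=\|K_0\|_{L^\infty(\T{d})}$ and $B=\|\mathcal{F}(K_0)\|_{\ell^\infty}$. It therefore suffices to show
\[
A\lesssim N^{d-2} \quad\text{and}\quad B\lesssim N^{-2}.
\]
Indeed, these give $N^{(d-2)(1-2/p)}N^{-4/p}=N^{d-2-\frac{2d}{p}}$, which is exactly \eqref{3: K0localBoundEqn}.

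\textbf{Fourier side.} Exactly as in the proof of \Cref{2: KerrFourierBoundProp}, integrating in $x$ first gives
\[
\mathcal{F}(K_0)(\mathbf{k}) = \prod_j \gamma(k_j/N)\int_{[0,1]}\eta_0(N^2t)\,e\bigl((|\mathbf{k}|^2-\la)t\bigr)\,dt = N^{-2}\hat{\eta}_0\Bigl(\frac{\la-|\mathbf{k}|^2}{N^2}\Bigr)\prod_j\gamma(k_j/N),
\]
using that $\eta_0(N^2\cdot)$ is supported in $|t|\lesssim N^{-2}$ inside one period. Since $\hat\eta_0$ and $\gamma$ are bounded, this yields $B\lesssim N^{-2}$.

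\textbf{Physical side.} On the support of $\eta_0(N^2t)$ we have $|t|\lesssim N^{-2}$. There we apply \Cref{3: propogatorDisperseiveLemma} with $a/q=0/1$ (the case Dirichlet's lemma permits near $0$), or simply the trivial bound $|G(t,x_j)|\lesssim N$, which comes from the rapidly decaying weights $\gamma(k/N)$ over $\lesssim N$ effective terms. Hence
\[
|K_0(x)|\le \int |\eta_0(N^2t)|\prod_j|G(t,x_j)|\,dt\lesssim N^{d}\cdot N^{-2}=N^{d-2},
\]
uniformly in $x$, which is $A\lesssim N^{d-2}$.

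\textbf{Expected obstacle.} The main potential subtlety is the Fourier-side estimate: one must confirm that the cutoff $\eta_0(N^2t)$, viewed periodically on $[0,1]\cong S^1$, has Fourier coefficients bounded by $N^{-2}$ uniformly in the integer frequency $l=|\mathbf{k}|^2-\la$, with no mean-zero subtraction needed. This holds because the periodization of a compactly supported bump of width $N^{-2}$ has $l$-th coefficient $N^{-2}\hat\eta_0(l/N^2)$ once $N$ is large. Given both bounds, the interpolation is routine; the hypothesis $d\geq5$ plays no role beyond consistency with the rest of the section.
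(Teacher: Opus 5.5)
Your proof is correct. Its skeleton matches the paper's: Riesz--Thorin between the $L^2\to L^2$ bound $\|\mathcal{F}(K_0)\|_{\ell^\infty}\lesssim N^{-2}$ and the $L^1\to L^\infty$ bound $\|K_0\|_{\infty}\lesssim N^{d-2}$, and your Fourier-side computation is the same as the paper's. The difference is in how you obtain $\|K_0\|_\infty\lesssim N^{d-2}$.

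\textbf{The paper's route.} It expands $K_0(x)=\sum_{\mathbf{k}}N^{-2}\widehat{\eta_0}(N^{-2}(|\mathbf{k}|^2-N^2))\,e(\mathbf{k}\cdot x)$ and counts lattice points sphere by sphere. There are $\lesssim N^2$ admissible values of $|\mathbf{k}|^2-N^2$, each contributing $\lesssim N^{d-2}$ points. This per-sphere count is where the paper invokes $d\geq 5$, to avoid an $N^{\epsilon}$ loss.

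\textbf{Your route.} You stay in the time-integral representation and use $|G(t,x_j)|\le\sum_k|\gamma(k/N)|\lesssim N$ together with $\int_{[0,1]}|\eta_0(N^2t)|\,dt\lesssim N^{-2}$. This is more elementary and dimension-free. It shows the local estimate holds with no loss in every dimension, so neither the hypothesis $d\geq 5$ nor the $\epsilon$-loss the paper's remark anticipates for $d=2,3,4$ is needed for this piece. Even the paper's count could drop the per-sphere bound by simply counting the $\lesssim N^d$ lattice points in a ball of radius $\lesssim N$. The paper's version has the advantage of making the link to lattice-point counting explicit, which is the viewpoint used in the rest of the section.

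\textbf{Two minor points.}
\begin{enumerate}
\item The dispersive lemma as stated requires $1\le a<q<N$, so it does not literally cover $a/q=0/1$. Your fallback to the trivial bound makes this irrelevant.
\item Your ``expected obstacle'' is not one. The bound $\bigl|\int_{[0,1]}\eta_0(N^2t)\,e(lt)\,dt\bigr|\le\int_{[0,1]}|\eta_0(N^2t)|\,dt\lesssim N^{-2}$ holds uniformly in $l$ without identifying the coefficient exactly.
\end{enumerate}
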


\begin{proof}
    First we write

    \begin{equation*}
        K_0(x) = \int_{[0,1]}\eta_0(N^2t)\prod_{j=1}^d\sum_{k_j}\gamma(k_j/N)e(x_jk_j+k_j^2t)e(-\la t)dt.
    \end{equation*}
    We may take the Fourier transform with respect to time as, after some trivial identifications, we can replace $[0,1]$ with integration over $\R{}$. Then

    \begin{equation*}
        K_0(x) = \sum_{\mathbf{k}\in\Z{d}}N^{-2}\widehat{\eta_0}(N^{-2}(|\mathbf{k}|^2-\la))e(\mathbf{k}\cdot x).
    \end{equation*}
    Recall that $\eta_0$ is Schwartz, and so its Fourier transform enjoys rapid decay outside of $B(0,c)$ for some $c$. By the lattice point counts, we bound the number of $\mathbf{k}$ such that $|\mathbf{k}|^2-\la=X$ with $X\lesssim N^2$ by $N^2\cdot N^{d-2}$, where there are no additional $\eps$ factors as $d\geq 5$. By the triangle inequality, we therefore have

    \begin{equation}\label{3: K0LintyBoundInProofEqn}
        \lpn{K_0}{\infty}\lesssim N^{d-2}.
    \end{equation}
    We also bound the Fourier coefficients. To compute

    \begin{equation*}
        \mathcal{F}(K_0)(\mathbf{k}) = \int_{\R{d}}\bigPara{\int_{[0,1]}\eta_0(N^2t)\prod_{j=1}^d\sum_{k}\gamma(\frac{k_j}{N})e(x_jk_j+k_j^2t-\la t)dt}e(-\mathbf{k}\cdot x)dx,
    \end{equation*}
    we simply note that the only expressions which depends on $x$ inside the $t$ integral are the exponentials $e(x_jk_j)$. Using the cancellation in $x$ and taking the Fourier transform with respect to time yields

    \begin{equation*}
        \mathcal{F}(K_0)(\mathbf{k}) = N^{-2}\widehat{\eta_0}(N^{-2}(|\mathbf{k^2}|-\la))\prod_{j=1}^d\gamma(\frac{k_i}{N}).
    \end{equation*}
    Here, $\mathbf{k} = (k_1,...,k_d)$. The triangle inequality then implies

    \begin{equation}\label{3: FTK0LinftyBoundProofEqn}
        \lpn{\mathcal{F}(K_0)}{\infty}\lesssim N^{-2}.
    \end{equation}
    The bounds \eqref{3: K0LintyBoundInProofEqn} and \eqref{3: FTK0LinftyBoundProofEqn} immediately imply, for any $F$, the bounds

    \begin{align*}
        &\lpn{K_0*F}{2}\lesssim N^{-2}\lpn{F}{2},\\
        &\lpn{K_0*F}{\infty}\lesssim N^{d-2}\lpn{F}{1}.
    \end{align*}
    The result follows from interpolation.
\end{proof}
\begin{remark}\label{3: LossRemark}
    Our proof in dimension $2,3,4$ will also imply the sharp bound for $K_0*F$, but with an additional $\eps$ factor. Recall such a factor is necessary in $L^p$ estimates because the lower bound for the number of lattice points on spheres in low dimensions has such a factor. The advantage of isolating the local contribution is that we are now comfortable proving stronger bounds for the other pieces of the kernel without fear of a contradiction, although this does not play into our proof.
\end{remark}
\noindent
The main difficultly will be in estimating $K_{Q,s}$. We have the following.

\begin{proposition}\label{3: KQsLinftyBoundProp}
    For each dyadic $2\leq Q<\frac{1}{100}N$ and $Q<2^s<N$, we have

    \begin{equation}\label{3: KQsLinftyBoundEqn}
        \lpnorm{K_{Q,s}}{\infty}{\T{d}}\lesssim (N2^s)^{\frac{d}{2}-1}Q^{-\frac{d-4}{2}}.        
    \end{equation}
\end{proposition}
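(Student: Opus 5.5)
The plan is to bound $K_{Q,s}$ pointwise by the triangle inequality, taking the absolute value inside the time integral. Since $|e(-\la t)|=1$ and $0\leq\eta_{Q,s}\lesssim 1$, for each $x\in\T{d}$ this gives
\begin{equation*}
|K_{Q,s}(x)|\leq \int_{[0,1]}\prod_{j=1}^d|G(t,x_j)|\,\eta_{Q,s}(t)\,dt \lesssim \sum_{a/q\in R_Q}\int_{|t-\frac aq|\sim (N2^s)^{-1}}\prod_{j=1}^d|G(t,x_j)|\,dt .
\end{equation*}
No cancellation in $t$ or in the sum over $a/q$ should be needed. The point is that the support of $\eta_{Q,s}$ is already localized finely enough for the dispersive estimate \Cref{3: propogatorDisperseiveLemma} to be sharp there.

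\textbf{Step 1: check the hypotheses of \Cref{3: propogatorDisperseiveLemma}.} Fix $a/q\in R_Q$, so $Q\leq q<2Q$ and $(a,q)=1$. Since $Q<N/100$, we have $q<N$. On the support, $|t-a/q|\leq (N2^s)^{-1}$, because $\eta$ is supported in $|t|\leq 1$. As $2^s$ and $Q$ are dyadic with $2^s>Q$, we get $2^s\geq 2Q>q$, hence $|t-a/q|\leq (Nq)^{-1}$. The lemma therefore applies.

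\textbf{Step 2: apply the lemma.} On this arc $|t-a/q|\sim (N2^s)^{-1}$, so
\begin{equation*}
|G(t,x_j)|\lesssim q^{-1/2}\min\{N,(N2^s)^{1/2}\}\lesssim Q^{-1/2}(N2^s)^{1/2}.
\end{equation*}
This holds uniformly in $x_j$. In the endpoint case $2^s\sim N$, where the arc is modified to $|t-a/q|\lesssim N^{-2}$, we instead use the $N$ branch of the minimum, which gives $q^{-1/2}N\sim Q^{-1/2}(N2^s)^{1/2}$. So the same bound holds.

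\textbf{Step 3: count and multiply.} There are $\#R_Q\lesssim Q^2$ arcs, each of length $\lesssim (N2^s)^{-1}$. Multiplying everything together,
\begin{equation*}
\lpnorm{K_{Q,s}}{\infty}{\T{d}}\lesssim Q^2\,(N2^s)^{-1}\,Q^{-\frac d2}(N2^s)^{\frac d2} = (N2^s)^{\frac d2-1}Q^{-\frac{d-4}{2}},
\end{equation*}
which is \eqref{3: KQsLinftyBoundEqn}.

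The only step requiring any care is Step 1: it must be verified that the arcs defining $\eta_{Q,s}$ lie inside the Dirichlet-type neighborhoods $|t-a/q|\leq (qN)^{-1}$ where \Cref{3: propogatorDisperseiveLemma} is valid, including the modified endpoint $2^s\sim N$. This is where the constraints $2^s>Q$ and $Q<N/100$ enter. Crucially, the lemma carries no $\eps$-loss, so the resulting bound is loss-free.
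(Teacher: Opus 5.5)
Your proof is correct. It is the short route the paper itself mentions just before its proof: the dispersive bound of \Cref{3: propogatorDisperseiveLemma} combined with H\"older/the triangle inequality.

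The proof the paper actually writes out is deliberately longer. It proceeds as follows:
\begin{enumerate}
\item It expands each $G(t,x_j)$ by Poisson summation as $\sum_{m}S(a,m,q)J(x_j,\varphi,m,q)$.
\item It moves the sum over $a$ inside, forming the singular-series-type sum $\mathcal{S}(m,q)$.
\item It bounds $\mathcal{S}(m,q)$ trivially by $q^{1-\frac d2}$, as in \eqref{3: S(m,q)TriangleIneqEqn}, using \Cref{4: GaussSumBoundProp}.
\item It bounds $\sum_{\mathbf m}\prod_j|J(x_j,\varphi,m_j,q)|$ by $(N2^s)^{d/2}$ via (non)stationary phase.
\item It integrates over arcs of length $(N2^s)^{-1}$ and sums over $q\sim Q$.
\end{enumerate}

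The numerology of the two arguments is identical. The trivial bound $|\mathcal{S}(m,q)|\le q\cdot q^{-d/2}$ is your count of $a$'s times $(q^{-1/2})^d$. The $J$-estimate is just a re-derivation of the dispersive bound. So neither argument exploits any cancellation in $a$.

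What the paper's version buys is visibility. It isolates $\mathcal{S}(m,q)$, whose Kloosterman/Sali\'e-type cancellation (discussed in \Cref{SectionExpSumEstimates}) is the natural place to try to improve the power of $Q$, and hence the range $p>\frac{2d}{d-4}$.

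What yours buys is brevity: it outsources the stationary-phase bookkeeping on $J$ to the cited lemma. It also makes explicit a check the paper leaves implicit. The arcs, including the modified endpoint arcs when $2^s\sim N$, lie inside the neighborhoods $|t-\frac aq|\le (qN)^{-1}$ where the lemma is valid. This follows from $q<2Q\le 2^s$ and $q<N$.
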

Using the bound in \Cref{3: propogatorDisperseiveLemma} and H\"older's inequality will prove the result almost immediately. We provide an ostensibly more complicated proof where we write out $K_{Q,s}(x)$ in full using the formulation reached after the statement of \Cref{2: KQSpaceBoundProp}. This is for the purposes of illustrating the difficulty in improving \eqref{3: KQsLinftyBoundEqn}. This gives an expression for $K_{Q,s}(x)$ as

\begin{equation}\label{3: KQsFullyWrittenOutEqn}
\int_{[0,1]}\sum_{\mathbf{m}\in\Z{d}}\bigPara{\prod_{j=1}^dS(a,m_j,q)e(-\la \frac{a}{q})}\bigPara{\prod_{j=1}^dJ(x_j,\varphi,m_j,q)e(-\la\varphi)}\eta_{Q,s}(t)dt.
\end{equation}
This obviously unwieldy expression can be simplified. Recall that $\eta_{Q,s}(t)$ localizes to $\sim (N2^s)^{-1}$ neighborhoods of rational $a/q$ such that $q\sim Q$ and $(a,q)=1$. The first factor in the sum is independent of $\varphi$; it only depends on $a$ and $q$. Because of this, let us define 

\begin{equation}
    \mathcal{S}(m,q) \coloneq \sum_{(a,q)=1}\prod_{j=1}^dS(a,m_j,q)e(-\la \frac{a}{q}).
\end{equation}

This term will appear naturally when we integrate, while the term involving $J$ will be dealt with by similar techniques to what we used in \Cref{Section2}.

By the triangle inequality we have

\begin{equation}\label{3: S(m,q)TriangleIneqEqn}
    |\mathcal{S}(m,q)|\lesssim q^{1-\frac{d}{2}}.
\end{equation}

Now we prove \Cref{3: KQsLinftyBoundProp}.
\begin{proof}
Momentarily drop the index $j$. Recall

\begin{equation*}
    J(x,\varphi, m, q) =\int_{\R{}}\gamma(y/N)e((x+\frac{m}{q})y+\varphi y^2)dy.
\end{equation*}
In contrast to the previous section, $q\leq N$ here. The trivial inequality and stationary phase imply as before that

\begin{equation*}
    |J(x,\varphi, m, q)|\lesssim \min \{N,|\varphi|^{-1/2}\}.
\end{equation*}
However, as we are dealing with smaller $q$, only $\sim 1$ of the $m$ will significantly contribute. Recall that if

\begin{equation*}
    |x+\frac{m}{q}|\lesssim N^{-1},
\end{equation*}
we use the trivial estimate, but by repeated integration by parts we have that

\begin{equation*}
    |x+\frac{m}{q}|\sim 2^j N^{-1}
\end{equation*}
implies

\begin{equation*}
    |J(x,\varphi, m, q)|\lesssim_A 2^{-jA} \min \{N,|\varphi|^{-1/2}\}.
\end{equation*}
There are $\sim 2^j$ such $m$, and only $\sim 1$ integers $m$ such that $|x+\frac{m}{q}|\lesssim N^{-1}$. By integrating by parts a finite number of times not depending on $N$, we get enough decay in $2^j$ that we may sum the series. Because of this we have

\begin{equation*}
    \sum_{\mathbf{m}\in\Z{d}}\bigl|\prod_{j=1}^dJ(x_j,\varphi,m_j,q)e(-\la\varphi)\bigr|\lesssim \min\{N^d,|\varphi|^{-d/2}\}.
\end{equation*}
We need some care before we deploy this bound. Recalling \eqref{3: KQsFullyWrittenOutEqn}, we will first break the domain up based on the disjoint components of $\eta_{Q,s}$, which are centered around fractions $a/q$. On each of these components, $S(a,m,q)$ is constant as it does not depend on $\varphi$, and so we can sum in $a$ and $q$. Additionally, $J(x,\varphi,m,q)$ does not depend on $a$ and so has the same values on any component of the support of $\eta_{Q,s}$. This gives after a change of variables

\begin{equation*}
\sum_{q\sim Q}\sum_{\mathbf{m}\in\Z{d}}\mathcal{S}(m,q)\int_{E}\bigPara{\prod_{j=1}^dJ(x_j,\varphi,m_j,q)e(-\la\varphi)}\eta_{Q,s}(t)dt.
\end{equation*}
Here, $E$ is a set of measure $\sim (N2^s)^{-1}$. As we have absorbed the sum in $a$, we now use the triangle inequality. Our bound for $\mathcal{S}(m,q)$ is independent of $m$, so, much like in the proof of \Cref{2: propogatorDisperseiveLemma}, we focus on the $J$ term to sum. 

Over the domain we are integrating in the expression for $K_{Q,s}$, we have that $|\varphi|\sim (N2^s)^{-1}$. So this bound becomes

\begin{equation*}
    \sum_{\mathbf{m}\in\Z{d}}\bigl|\prod_{j=1}^dJ(x_j,\varphi,m_j,q)e(-\la\varphi)\bigr|\lesssim (N2^s)^{d/2}.
\end{equation*}
When $2^s\sim N$ we use the trivial estimate. Returning to \eqref{3: KQsFullyWrittenOutEqn}, we see the support of $\eta_{Q,s}$ means we are integrating over a set of measure $(N2^s)^{-1}$, but with a sum over $a$ and $q$. The sum over $a$ is absorbed into the definition of $\mathcal{S}(m,q)$, and so by the triangle inequality and \eqref{3: S(m,q)TriangleIneqEqn} we have

\begin{equation*}
    |K_{Q,s}(x)|\lesssim (N2^s)^{d/2-1}\sum_{q\sim Q}q^{1-\frac{d}{2}}.
\end{equation*}
We conclude by summing.

\begin{equation*}
    |K_{Q,s}(x)|\lesssim (N2^s)^{d/2-1}Q^{-\frac{d-4}{2}}.
\end{equation*}
\end{proof}
\noindent
Now we import a bound for the error term.

\begin{proposition}\label{3: KerrSpaceBoundProp}
    We have

    \begin{equation}\label{3: KerrSpaceBoundEqn}
        \lpn{K_{err}}{\infty}\lesssim N^{\frac{d-1}{2}+\eps}.
    \end{equation}
\end{proposition}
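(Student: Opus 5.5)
Since $K_{err}$ is cut off away from the major arcs, I will bound it pointwise by an $L^1_t$ estimate of the integrand on the remaining set of times. This is essentially the argument behind Proposition 4.1 of \cite{bourgainDemeterDiscreteRestrictImprovements2013}, which the paper already imports as \eqref{2: importedKQBoundSmallHeightEqn}.

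By definition,
\begin{equation*}
K_{err}(x)=\int_{[0,1]}\prod_{j=1}^d G(t,x_j)\,e(-\la t)\Bigl(1-\eta_0(N^2t)-\sum_{Q,s}\eta_{Q,s}(t)\Bigr)dt .
\end{equation*}
Write $\psi(t)$ for the bracketed cutoff. It satisfies $|\psi|\lesssim 1$, and its support is contained in the set of $t$ that are not within $\sim (N2^s)^{-1}$ of any $a/q$ with $q<N/100$, in the ranges covered by the dyadic pieces. Here is how Dirichlet's Lemma (\Cref{3: dirichlet'sLemma}) describes such $t$. Every $t$ can be written as $t=a/q+\varphi$ with $q\le N$ and $|\varphi|\le (qN)^{-1}$. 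The pieces $K_0$ and $K_{Q,s}$ exhaust all pairs with $q<N/100$ and $Q<2^s<N$, which corresponds to $|\varphi|\lesssim (NQ)^{-1}$.

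So on $\supp\psi$ there are two possibilities:
\begin{enumerate}
\item $q\sim N$ (that is, $N/100\le q\le N$);
\item $q< N/100$ and $|\varphi|\sim (qN)^{-1}$, which is the boundary layer $2^s\sim Q$ not fully covered because of the strict inequality $Q<2^s$.
\end{enumerate}
In either case \Cref{3: propogatorDisperseiveLemma} gives
\begin{equation*}
|G(t,x_j)|\lesssim q^{-1/2}|\varphi|^{-1/2}\lesssim q^{-1/2}(qN)^{1/2}=N^{1/2},
\end{equation*}
and when $q\sim N$ the trivial bound $q^{-1/2}N$ also gives $N^{1/2}$. Hence $\sup_{t\in\supp\psi}|G(t,x_j)|\lesssim N^{1/2}$ uniformly in $x_j$.

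The trivial estimate $\prod_j|G|\le N^{d/2}$ integrated over $[0,1]$ gives $N^{d/2}$, which is worse than the claimed $N^{\frac{d-1}{2}+\eps}$ by $N^{1/2}$. To recover this factor I split the product: apply the $L^\infty$ bound to $d-2$ of the factors and $L^2_t$ (Cauchy--Schwarz) to the remaining two,
\begin{equation*}
|K_{err}(x)|\lesssim N^{\frac{d-2}{2}}\int_{[0,1]}|G(t,x_1)||G(t,x_2)|\,dt\le N^{\frac{d-2}{2}}\|G(\cdot,x_1)\|_{L^2_t}\|G(\cdot,x_2)\|_{L^2_t}.
\end{equation*}
By Plancherel in $t$, $\|G(\cdot,x)\|_{L^2_t}^2=\sum_k\gamma(k/N)^2\lesssim N$, since the frequencies $k^2$ are distinct for $k\ge0$ and each appears at most twice. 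This yields $|K_{err}|\lesssim N^{\frac{d-2}{2}}\cdot N=N^{d/2}$ again, so Plancherel alone loses. The gain must come from a fourth-moment estimate. Using H\"older with $d-4$ factors in $L^\infty$ and four in $L^4_t$,
\begin{equation*}
\int_{[0,1]}|G(t,x)|^4dt\lesssim N^{2+\eps},
\end{equation*}
which is the count of solutions to $k_1^2+k_2^2=k_3^2+k_4^2$ with $|k_i|\lesssim N$. This gives
\begin{equation*}
|K_{err}|\lesssim N^{\frac{d-4}{2}}N^{2+\eps}=N^{\frac d2+\eps},
\end{equation*}
still off by $N^{1/2}$.

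The main obstacle is therefore to exploit the genuine minor-arc saving. I would follow \cite{bourgainDemeterDiscreteRestrictImprovements2013} and dyadically decompose $\supp\psi$ according to the size of $|G(t,x_j)|$ for each $j$. On $\supp\psi$, \Cref{3: propogatorDisperseiveLemma} shows that the measure of the set of $t$ where $|G(t,x_j)|\sim 2^r$ is $\lesssim N^{2+\eps}2^{-6r}$. This comes from counting the arcs with $q\lesssim N^2 2^{-2r}$ and $|\varphi|\lesssim q^{-1}2^{-2r}$ that lie outside the major arcs removed above. The exclusion of $|\varphi|\ll (qN)^{-1}$ is what improves the exponent from the $2^{-4r}$ of the analogous count in the proof of \Cref{2: KQSpaceBoundProp} to $2^{-6r}$.

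Combining this distribution bound with H\"older over $j$, with all factors capped at $N^{1/2}$, sums to
\begin{equation*}
\sum_{2^r\le N^{1/2}}2^{rd}N^{2}2^{-6r}\lesssim N^{\frac{d-1}{2}+\eps}
\end{equation*}
when $d\ge 5$, the sum being dominated by its top term $2^r\sim N^{1/2}$. Verifying this distribution estimate carefully, in particular that the $N^\eps$ from the divisor-type count of arcs is the only loss, is where I expect the real work to lie. Alternatively, the statement can simply be quoted from \cite{bourgainDemeterDiscreteRestrictImprovements2013}, Proposition 4.1, with $Q\sim N$, since $K_{err}$ is exactly their minor-arc piece.
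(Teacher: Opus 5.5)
The paper does not prove this statement itself. It quotes the minor-arc estimate (2.15) of \cite{bourgainTorusEFunc1993} with $Q\sim 2^s\sim N$. Your self-contained route has a genuine gap, and the gap cannot be closed within its framework.

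First, the distribution bound $|\{t\in\operatorname{supp}\psi: |G(t,x_j)|\sim 2^r\}|\lesssim N^{2+\varepsilon}2^{-6r}$ does not follow from \Cref{3: propogatorDisperseiveLemma}. An upper bound on $|G|$ says nothing about how often $|G|$ is close to that bound. On $\operatorname{supp}\psi$ the lemma only gives $N^{1/2}$, with no constraint on $q$.

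Second, the bound is false at $2^r\sim N^{1/2}$. Let $M$ be the set of $t$ with $\psi(t)=1$ whose Dirichlet denominator is $\geq N/100$. The cutoffs and the Dirichlet arcs with $q<N/100$ together cover measure at most about $\frac{6}{N}\sum_{q<N/50}1<\frac18$, so $|M|\geq\frac12$, and $|G|\lesssim q^{-1/2}N\lesssim N^{1/2}$ on $M$. Parseval in $x$ gives $\int_{\mathbb{T}}\int_M|G(t,x_1)|^2\,dt\,dx_1=|M|\sum_k\gamma(k/N)^2\gtrsim N$. Combined with the pointwise bound, this shows that for a set of $x_1$ of measure $\gtrsim 1$, one has $|G(t,x_1)|\gtrsim N^{1/2}$ on a subset of $M$ of measure $\gtrsim1$, not $N^{-1+\varepsilon}$.

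Third, and more fundamentally, take $x=(x_1,\dots,x_1)$. Then $\int|\psi|\prod_j|G(t,x_j)|\,dt\gtrsim N^{d/2}$. So every argument that puts absolute values inside the $t$-integral (H\"older, moments, distribution functions) is stuck at $N^{d/2}$. That is exactly what the trivial bound \eqref{3: S(m,q)TriangleIneqEqn} gives, as the paper remarks right after the statement.

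The missing idea is cancellation in $t$, specifically in the sum over $a$. On the arcs around $a/q$ with $q\sim N$ (width $\sim N^{-2}$), expand as in \eqref{3: KQsFullyWrittenOutEqn} and sum over $a$ first.
\begin{itemize}
\item By \Cref{3: singularSeriesLemma}, $\mathcal{S}(m,q)$ is essentially $q^{-d/2}$ times a Kloosterman or Sali\'e sum. Hence $|\mathcal{S}(m,q)|\lesssim q^{\frac{1-d}{2}+\varepsilon}\sqrt{(\lambda,q)}$ instead of $q^{1-\frac d2}$.
\item Over such an arc, $\int\sum_{\mathbf m}\prod_j|J|\,d\varphi\lesssim N^{d-2}$, since only $O(1)$ values of each $m_j$ matter when $q\sim N$.
\item The gcd factor averages well: $\sum_{q\sim N}\sqrt{(\lambda,q)}\lesssim N^{1+\varepsilon}$.
\end{itemize}
Together these give $N\cdot N^{\frac{1-d}{2}+\varepsilon}\cdot N^{d-2}=N^{\frac{d-1}{2}+\varepsilon}$. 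You would also need to handle the $a$-dependence of the minor-arc pieces (Kloosterman's refinement). This is the content of the estimate the paper imports.

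Your fallback citation is also off target. Proposition 4.1 of \cite{bourgainDemeterDiscreteRestrictImprovements2013}, as the paper uses it in \Cref{Section2}, bounds $K^Q$. That is the $c_Q$-normalized kernel on width-$Q^{-2}$ arcs around fractions with prime denominators $\sim Q$. It is not $K-K_0-\sum_{Q,s}K_{Q,s}$. The appropriate reference is (2.15) of \cite{bourgainTorusEFunc1993}.
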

This is a direct consequence of (2.15) in \cite{bourgainTorusEFunc1993}, taking $Q\sim 2^s\sim N$. Note that \Cref{3: KQsLinftyBoundProp} would imply $\lpn{K_{err}}{\infty}\lesssim N^{\frac{d}{2}}$. Such improvements are not obvious for \eqref{3: KQsLinftyBoundEqn} using cancellation like is done in cited works because of our intolerance of $\eps$-losses. This will be discussed more a the end of Section 4, but the improved estimates in \cite{bourgainDemeterDiscreteRestrictImprovements2013} only hold for $Q\geq N^{c}$ for small $c>0$. As the error term is essentially the case $Q\sim N$, there is no harm having an $\eps$-loss in $N$ or $2^s$ there. The main term in $Q\sim 1$. Our pursuit of sharp estimates often limits us to only using the trivial bounds for exponential sums; finding a way around this would improve the results presented here. A similar restriction appears in the proof of the next proposition.

It will also be beneficial to compute the magnitude of the Fourier transform of $K_{Q,s}$. Such an estimate can be used in two ways, for proving $L^2\rightarrow L^2$ bounds or for a specific argument involving level sets. In this section we will use the latter. In the following estimates, it is important again that no $\eps$-losses appear in the parameters $N$ and $2^s$.

\begin{proposition}\label{3: KQsFourierCoeffBoundProp}
    For any $Q,s,$ and $\mathbf{k}\in\Z{d}$, we have

    \begin{equation}\label{3: smallQKQsFourierCoeffBoundEqn}
        |\mathcal{F}(K_{Q,s})(\mathbf{k})| \lesssim \frac{Q^2}{N2^s}.
    \end{equation}
\end{proposition}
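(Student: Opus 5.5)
We compute the Fourier coefficient directly, exactly as in the proofs of \Cref{2: KerrFourierBoundProp} and \Cref{3: K0localBoundProp}. Substituting the definition of $G(t,x_j)$ into $K_{Q,s}$ and integrating against $e(-\mathbf{k}\cdot x)$ over $\T{d}$ gives
\begin{equation*}
    \mathcal{F}(K_{Q,s})(\mathbf{k}) = \prod_{j=1}^d\gamma\bigl(\tfrac{k_j}{N}\bigr)\int_{[0,1]}\eta_{Q,s}(t)\,e\bigl((|\mathbf{k}|^2-\la)t\bigr)\,dt .
\end{equation*}
The reason is that orthogonality in $x$ selects the single term $(k_1,\dots,k_d)$ from each one-dimensional sum.

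Since $\gamma$ is bounded, it remains to bound the time integral, and the trivial estimate is enough. We bound it by
\begin{equation*}
    \int_{[0,1]}|\eta_{Q,s}(t)|\,dt\le \lpn{\eta}{\infty}\,\bigl|\supp\eta_{Q,s}\bigr| .
\end{equation*}
By \eqref{3: etaQDef}, $\eta_{Q,s}$ is a sum of $\#R_Q\lesssim Q^2$ translates of $\eta((\cdot)N2^s)$. Each translate is bounded by $1$ and supported on a set of measure $\lesssim (N2^s)^{-1}$. The translates also have disjoint supports, because $|a/q-a'/q'|\ge (4Q^2)^{-1}\gg (N2^s)^{-1}$ when $Q<N/100$ and $2^s>Q$. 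This gives the bound $Q^2(N2^s)^{-1}$. In the endpoint case $2^s\sim N$, where $\eta$ is modified to have support of width $\lesssim N^{-2}$, the same count gives $Q^2N^{-2}\sim Q^2(N2^s)^{-1}$.

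We could instead try to exploit cancellation in $\sum_{a}e(la/q)$ (Ramanujan sums), as in \Cref{2: KerrFourierBoundProp}. This is not needed for the stated bound, and avoiding it keeps the estimate free of any $\eps$-loss in $N$ or $2^s$, which the paper stresses is important. The only point requiring care is checking that the identification of $[0,1]$ with $S^1$ and the periodicity of the integrand make the $x$-integration legitimate. This is routine, since everything is $1$-periodic in $t$ and $x$, so there is no real obstacle.
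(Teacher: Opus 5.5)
Your proof is correct and is essentially the paper's argument. The paper also reduces to $\mathcal{F}(K_{Q,s})(\mathbf{k})=\widehat{\eta_{Q,s}}(|\mathbf{k}|^2-N^2)\prod_j\gamma(k_j/N)$ (up to a harmless sign, where your version is the accurate one), writes $\widehat{\eta_{Q,s}}(l)=(N2^s)^{-1}\hat{\eta}(l/N2^s)\sum_{a/q\in R_Q}e(la/q)$, and applies the triangle inequality with $\#R_Q\sim Q^2$; this is exactly your bound $\|\eta_{Q,s}\|_{L^1}\lesssim Q^2(N2^s)^{-1}$, and your disjointness check is valid though not even needed, since summing $\int|\eta((t-a/q)N2^s)|\,dt$ over $R_Q$ already gives it.
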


\begin{proof}
    When computing $\mathcal{F}(K_{Q,s})(\mathbf{k})$, we note that 

    $$G(t,x) = \sum_{k\in \Z{}}\gamma(k/N)e(kx+k^2t^2)$$
    is the only $x$ dependence in the definition of $K_{Q,s}$. So, like in the proof of \Cref{3: K0localBoundProp}, we may directly take the Fourier transform in $x$ and then in time, using the small support of $\eta_{Q,s}$ to view the transform in time as occurring on $\R{}$. Along with the product structure on the torus, this gives that

    \begin{equation*}
        \mathcal{F}(K_{Q,s})(\mathbf{k}) = \widehat{\eta_{Q,s}}(|\mathbf{k}|^2-\la)\prod_{i=1}^d\gamma(k_i/N).
    \end{equation*}
    The smooth cutoffs play no role in our analysis going forward. Let $l= |\mathbf{k}|^2-\la$. Using linearity and the definition \eqref{3: etaQDef}, we compute $\widehat{\eta_{Q,s}}$ as

    \begin{equation*}
        \widehat{\eta_{Q,s}}(l) = (N2^s)^{-1}\hat{\eta}(\frac{l}{N2^s})\sum_{\frac{a}{q}\in R_Q}e(la/q).
    \end{equation*}
    We conclude by $\#R_Q\sim Q^2$ and the triangle inequality. 

    $$|\mathcal{F}(K_{Q,s})(\mathbf{k})|\lesssim (N2^s)^{-1}Q^2.$$
\end{proof}
\noindent
These are all the estimates required for our proof.
\subsection{Proof of sharp result}
Using the estimates proven in the previous subsection and basic inequalities, we have, for general $F$, that

\begin{align*}
    &\lpn{F* K_{Q,s}}{2}\lesssim \frac{Q^2}{N2^s}\lpn{F}{2},\\
    &\lpn{F* K_{Q,s}}{\infty}\lesssim (N2^s)^{\frac{d}{2}-1}Q^{-\frac{d-4}{2}}\lpn{F}{1}.
\end{align*}
We wish to interpolate these these two estimates and sum over $Q$ and $s$. The estimate will always be largest when $2^s\sim N$, and we will be able to proceed when the power of $Q$ is negative, which means the estimate is dominated by the $Q\sim 1$ terms. Importantly, we have the correct powers of $N$ and $2^s$ without any $\eps$-loss which, in the end, give us the desired sharp estimate that agrees with the local term $K_0$, albeit in a restricted range of $p$. Interpolation yields

\begin{equation}\label{3: balanceQEqn}
    \lpn{F* K_{Q, s}}{p}\lesssim (N2^s)^{\frac{d-2}{2}-\frac{d}{p}}Q^{\frac{d}{p}-\frac{d-4}{2}}\lpn{F}{p'}.
\end{equation}
The power of $Q$ is negative precisely when $p>\frac{2d}{d-4}$. This yields

\begin{equation*}
    \lpn{F* \sum_{Q,s} K_{Q,s}}{p}\lesssim N^{d-2-\frac{2d}{p}}\lpn{F}{p'},\quad p>\frac{2d}{d-4}.
\end{equation*}
Recall that this is the same bound as for $K_0$. For ease of notation define $K_{main} = K_0 + \sum_{Q,s}K_{Q, s}$. The above bound therefore holds for $K_{main}$ when $p>\frac{2d}{d-4}$ by the triangle inequality. We are now in position to prove the result.

\begin{remark}
    Even with a bound that was finite for $d=4$, we would not prove corresponding sharp bounds because of the $\eps$-loss present in the lattice point count. This manifests both in the local estimate and in the $L^\infty$ bound used in the computation below.
\end{remark}
\noindent
Following \cite{bourgainDemeterDiscreteRestrictImprovements2013}, assume $\|a_\xi\|_{\ell^2(\mathcal{F}_{d,\la})}=1$. Define

$$F(x) = \sum_{\xi\in \mathcal{F}_{d,N}}a_\xi e(x\cdot \xi).$$
Recall that the level sets $E_\alpha$ for this function are given by

$$E_\alpha = \{x\in\T{d}: |F(x)|>\alpha\}.$$
This allows us to define the oscillatory indicator functions as

\begin{equation*}
    f(x)=\frac{F(x)}{|F(x)|}1_{E_{\alpha}}(x).
\end{equation*}
These functions have the following crucial property, which is trivially verified.

\begin{equation*}
    \int_{\T{d}}|f(x)|^pdx = |E_\alpha|.
\end{equation*}
By the definitions of these functions, Plancherel's Theorem, and Cauchy-Schwarz, we have

\begin{equation*}
    \alpha |E_\alpha|\leq \int_{\T{d}}\bar{F}(x)f(x)dx = \sum_{\xi\in \mathcal{F}_{d,N}}\bar{a}_\xi \mathcal{F}(f)(\xi),
\end{equation*}
which implies

\begin{equation*}
        \alpha^2 |E_\alpha|^2\leq \sum_{\xi\in \mathcal{F}_{d,N}}|\mathcal{F}(f)(\xi)|^2 = \langle K*f,f \rangle.
\end{equation*}
Recall, we can split our kernel as

\begin{equation*}
    K = K_{main}+K_{err}.
\end{equation*}
For $p>\frac{2d}{d-4}$, we have the following estimate.

\begin{align*}
    \alpha^2|E_\alpha|^2&\leq \langle K * f, f \rangle = \langle K_{main} * f, f \rangle + \langle K_{err} * f, f \rangle \\
    &\leq \lpn{f}{p}\lpn{K_{main}*f}{p'} + N^{\frac{d-1}{2}+\eps}\langle|f|,|f| \rangle\\
    &\leq N^{d-2-\frac{2d}{p}}|E_\alpha|^{\frac{2}{p'}}+ N^{\frac{d-1}{2}+\eps}|E_\alpha|^2.
\end{align*}
If $\alpha^2\gtrsim N^{\frac{d-1}{2}+\eps_0}$, then the rightmost term can be absorbed into the left hand side. We then simplify to

\begin{equation}\label{3: sharpLevelSetBoundEqn}
    |E_\alpha| \leq N^{\frac{p(d-2)}{2}-d}\alpha^{-p},\quad p>2d/(d-4).
\end{equation}
The result now follows from standard estimates. Compute

\begin{equation*}
    \int_{\T{d}} |F|^p = \int_{N^{\frac{d-1}{4}+\eps_0}}^{N^{\frac{d-2}{2}}}\alpha^{p-1}|E_\alpha|d\alpha + \int_{0}^{N^{\frac{d-1}{4}+\eps_0}}\alpha^{p-1}|E_\alpha|d\alpha.
\end{equation*}
The second term is handled by the estimate $|E_\alpha|\lesssim N^\eps\alpha^{-\frac{2(n+1)}{n-1}}$, which is a corollary of the $\ell^2$-decoupling theorem. For the first term, we use \eqref{3: sharpLevelSetBoundEqn}. Let $p>p_1>2d/(d-4)$. Then

\begin{align*}
    \int_{N^{\frac{d-1}{4}+\eps_0}}^{N^{\frac{d-2}{2}}}\alpha^{p-1}|E_\alpha|d\alpha&\lesssim  N^{\frac{p_1(d-2)}{2}-d}\int_{N^{\frac{d-1}{4}+\eps_0}}^{N^{\frac{d-2}{2}}}\alpha^{p-1}\alpha^{-p_1}d\alpha\\
    &\lesssim  N^{\frac{p_1(d-2)}{2}-d} \alpha^{p-p_1}\rvert_{N^{\frac{d-1}{4}+\eps_0}}^{N^{\frac{d-2}{2}}}\\
    &\lesssim   N^{\frac{p_1(d-2)}{2}-d}\cdot N^{\frac{d-2}{2}(p-p_1)}\\
    &\lesssim N^{\frac{p(d-2)}{2}-d}.
\end{align*}
The result follows.
\section{Exponential Sum Estimates}\label{SectionExpSumEstimates}
In the previous arguments, two exponential sums arose. In this section, we sketch the proof of the required bounds for the generalized quadratic Gauss sums and explore why the bounds used for Kloosterman and Sail\'e sums in previous works are insufficient for our purposes. 

Recall that the generalized quadratic Gauss sums $S(a,m,q)$ are given, for integers $a,m,q$ such that $(a,q)=1$, by

\begin{equation}\label{4: S(a,m,q)DefEqn}
    S(a,m,q) = \frac{1}{q}\sum_{k=0}^{q-1}e(\frac{ak^2+mk}{q}).  
\end{equation}
We use a different sign convention for $m$ in this section, but this has no bearing on our results. We also have the following expression, which plays the role of the singular series from the circle method.
\begin{equation}\label{4: FancyS(m,q)DefEqn}
    \mathcal{S}(m,q) = \sum_{(a,q)=1}(\prod_{j=1}^dS(a,m_j,s)e(-\la \frac{a}{q})). 
\end{equation}

\subsection{Gauss Sum Estimates}\label{4: SubsectionGaussSumEstimates}
For the generalized quadratic Gauss sum, we wish to prove the following.

\begin{proposition}\label{4: GaussSumBoundProp}
    For $q\geq 2$, we have 

    \begin{equation}\label{4: GaussSumBoundEqn}
        |S(a,m,q)|\lesssim q^{-1/2}.
    \end{equation}
\end{proposition}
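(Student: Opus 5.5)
The plan is to use the classical squaring (Weyl differencing) argument. It gives the bound directly for every $q\geq 2$, every $m\in\Z{}$ and every $a$ with $(a,q)=1$, with an absolute implied constant and no $\eps$-loss. This uniformity is exactly what \Cref{2: kernelBoundProp} and \Cref{3: KQsLinftyBoundProp} require. Write $T = qS(a,m,q) = \sum_{k \bmod q} e\bigl(\frac{ak^2+mk}{q}\bigr)$. The summand is $q$-periodic in $k$, so $T$ may be regarded as a sum over $\Z{}/q\Z{}$, and the goal becomes $|T|^2\leq 2q$.

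First expand
\begin{equation*}
|T|^2=\sum_{k,l \bmod q} e\Bigl(\frac{a(k^2-l^2)+m(k-l)}{q}\Bigr).
\end{equation*}
Next substitute $k=l+h$ with $h$ running over residues mod $q$; this is a bijection of $(\Z{}/q\Z{})^2$. Since $k^2-l^2=h^2+2hl$, the double sum factors as
\begin{equation*}
|T|^2=\sum_{h \bmod q} e\Bigl(\frac{ah^2+mh}{q}\Bigr)\sum_{l \bmod q} e\Bigl(\frac{2ahl}{q}\Bigr).
\end{equation*}
The inner sum is a complete sum of $q$-th roots of unity. It equals $q$ when $q\mid 2ah$ and $0$ otherwise.

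Because $(a,q)=1$, the condition $q\mid 2ah$ is equivalent to $q\mid 2h$. That is, $h\equiv 0$ or, when $q$ is even, $h\equiv q/2 \pmod q$. So at most two values of $h$ survive, each contributing a unimodular factor times $q$. The triangle inequality then gives $|T|^2\leq 2q$, hence $|S(a,m,q)|=|T|/q\leq \sqrt{2}\,q^{-1/2}$, which is \eqref{4: GaussSumBoundEqn}.

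There is no real obstacle here. The only point needing care is the even modulus, where the extra solution $h=q/2$ appears. It costs only the factor $\sqrt2$, so there is no need to compute the sum exactly via the evaluation of quadratic Gauss sums, which would require splitting by parity of $q$ and of $m$. In the prime-$q$ setting of \Cref{Section2}, $q$ is odd apart from $q=2$, and only $h=0$ survives, giving the sharper $|S|\leq q^{-1/2}$.
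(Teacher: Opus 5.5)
Your proof is correct, and it takes a genuinely different and more self-contained route than the paper.

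The paper first reduces by multiplicativity (\Cref{4: GaussSummsMultiplicativeLemma}) to prime powers. For odd $p^j$ it completes the square to write $S(a,m,p^j)$ as a unimodular phase times $\bigl(\frac{a}{p^j}\bigr)S(1,0,p^j)$, and then uses the classical bound for $S(1,0,p^j)$. The paper proves that bound only for primes, via the substitution $\alpha=k+l$, $\beta=k-l$, which requires $2$ to be invertible; otherwise it is imported. For $2^j$ with $j\geq 2$, the paper shows the sum vanishes when $m$ is odd (\Cref{4: OddForPower4Lemma}) and invokes the imported reciprocity law (\Cref{4: QuadRecipGaussSumLemma}) when $m$ is even.

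Your shift $k=l+h$ never divides by $2$, so it handles every modulus at once. The only cost is the extra residue $h=q/2$ for even $q$, i.e.\ the factor $\sqrt2$, and nothing is imported. If you keep track of that extra term, whose phase is $e(aq/4+m/2)=\pm1$, you get $|S(a,m,q)|^2=q^{-1}\bigl(1+e(aq/4+m/2)\bigr)$ for even $q$. This recovers exactly when the sum vanishes, including \Cref{4: OddForPower4Lemma}.

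What the paper's route buys is an explicit evaluation with phase and Jacobi symbol. That is the input \Cref{3: singularSeriesLemma} needs to rewrite $\mathcal{S}(m,q)$ via Kloosterman and Sali\'e sums. Where the proposition is actually used, in \Cref{2: kernelBoundProp} and in the bound $|\mathcal{S}(m,q)|\lesssim q^{1-\frac{d}{2}}$ in \Cref{Section3}, only the modulus enters, through the triangle inequality. So your constant, at worst $2^{d/2}$ after the $d$-fold product, is harmless.
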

We may further assume that $(a,q)=1$, however the multiplicative nature of quadratic Gauss sums means this reduction is redundant for the bound we are trying to state. We will show that these sums exhibit square root cancellation. The $S(a,m,q)$ generalize the standard quadratic Gauss sums.

\begin{equation*}
    S(1,0,q) = \frac{1}{q}\sum_{k=0}^{q-1}e(\frac{k^2}{q}).
\end{equation*}
It is known this sum exhibits square root cancellation, which we prove here for odd primes.

\begin{lemma}[Standard Gauss Sum Bound]\label{4: stdGaussSumBndLemma}
For any odd prime $p$, we have

\begin{equation*}\label{4: stdGaussSumBndEqn}
    |S(1,0,p)| \leq p^{-1/2}.
\end{equation*}
    
\end{lemma}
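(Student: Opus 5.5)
The plan is to compute $|S(1,0,p)|^2$ directly and show that it equals $p\cdot p^{-2}=p^{-1}$; this gives the bound with equality. Write
\begin{equation*}
    p^2|S(1,0,p)|^2 = \sum_{k=0}^{p-1}\sum_{k'=0}^{p-1}e\Bigl(\frac{k^2-k'^2}{p}\Bigr).
\end{equation*}
Substitute $k = k'+h$ with $h$ ranging over $\mathbb{Z}/p\mathbb{Z}$. This is legitimate since everything is $p$-periodic in $k$ and $k'$. Then $k^2-k'^2 = h^2+2hk'$, so
\begin{equation*}
    p^2|S(1,0,p)|^2 = \sum_{h=0}^{p-1}e\Bigl(\frac{h^2}{p}\Bigr)\sum_{k'=0}^{p-1}e\Bigl(\frac{2hk'}{p}\Bigr).
\end{equation*}

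Next, evaluate the inner sum by orthogonality of additive characters modulo $p$. It equals $p$ if $p\mid 2h$ and $0$ otherwise. This is where we use that $p$ is odd: since $p$ is an odd prime, $2$ is invertible mod $p$, so $p\mid 2h$ forces $h\equiv 0$. Only the $h=0$ term survives, and it contributes $e(0)\cdot p = p$. Hence $p^2|S(1,0,p)|^2 = p$, so $|S(1,0,p)| = p^{-1/2}$, which in particular gives the stated inequality.

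There is no real obstacle. The only point needing care is the invertibility of $2$ modulo $p$, which is exactly why the lemma is restricted to odd primes. For $p=2$ the same computation would pick up the extra term $h=1$, and the bound would fail in this normalized form. The same differencing argument will later extend to $S(a,m,q)$ with $(a,q)=1$ and $q$ odd, by completing the square. It can also be run for general odd $q$ without completing the square: after differencing, the inner sum is nonzero only when $q\mid 2ah$, and when $(a,q)=1$ and $q$ is odd this again forces $h\equiv 0$.
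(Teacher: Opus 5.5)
Your argument is correct and is essentially the paper's: square the sum, make a linear change of variables, and use orthogonality of additive characters, with oddness of $p$ entering through the invertibility of $2$. The only difference is where oddness is used: your shear $k=k'+h$ is always bijective, so oddness enters at the orthogonality step, whereas the paper's substitution $(k,l)\mapsto(k+l,k-l)$ needs $p$ odd just to be a bijection.

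One correction to your closing aside: for $p=2$ the extra $h=1$ term is $2e(1/2)=-2$, which cancels the $h=0$ term, so $S(1,0,2)=0$ and the bound holds trivially, as the paper notes. For $S(1,0,q)$ the constant-one bound fails only when $4\mid q$, e.g.\ $|S(1,0,4)|=2^{-1/2}>4^{-1/2}$.
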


\begin{proof}
We have

\begin{align*}
    p^2|S(1,0,p)|^2 &= \sum_{k=1}^{p-1}e(k^2/p)\overline{\sum_{l=1}^{p-1}e(l^2/p)} \\
    & = \sum_{k=0}^{p-1}\sum_{l=0}^{p-1}e((k^2-l^2)/p)\\
    & = \sum_{k=0}^{p-1}\sum_{l=0}^{p-1}e((k+l)(k-l))/p).
\end{align*}
We now make the change of variables $\alpha = k+l$ and $\beta=k-l$. This gives

\begin{equation*}
    p^2|S(1,0,p)|^2 = \sum_{\alpha=0}^{p-1}\sum_{\beta=0}^{p-1}e(\frac{4\alpha \beta}{p}).
\end{equation*}
The assumption that $p$ is odd means that multiplication by $4$ as we sum over the cyclic groups is an isomorphism, and so we can replace it with $1$. If $\alpha \neq 0$, then, by the geometric series formula, the sum in $\beta$ is $0$. Otherwise, it is $p$. This gives

\begin{equation*}
    p^2|S(1,0,p)|^2 = p.
\end{equation*}
Which obviously allows us to conclude.
\end{proof}
To prove an identical bound for any odd natural number $q$, one may use the imported \Cref{4: QuadRecipGaussSumLemma} below. For a more elementary proof, we refer to \cite{gaussSumMurtySiddhi2017}. The case $p=2$ is trivial.  For general even numbers, there may be an additional factor of $\sqrt{2}$ if $q$ is a multiple of $4$. See Corollary 1.2.3 in \cite{GaussJacobiBook} for a full statement. With this imported estimate, we have \Cref{4: stdGaussSumBndLemma} for all integers after proving the next lemma.

We now also state a series of technical results which we will use, along with the more general version of \Cref{4: stdGaussSumBndLemma}, to prove \Cref{4: GaussSumBoundProp}.

\begin{lemma}[Gauss sums are multiplicative]\label{4: GaussSummsMultiplicativeLemma}
    Assume $q = p_1p_2$ with $(p_1,p_2)=1$. Then

    \begin{equation}\label{4: GaussSummsMultiplicativeEqn}
        S(a,m,q) = S(ap_2,m,p_1)S(ap_1,m,p_2).
    \end{equation}
\end{lemma}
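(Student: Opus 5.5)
The proof is a direct application of the Chinese Remainder Theorem to the summation variable, followed by expanding the phase.

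Since $(p_1,p_2)=1$, the map
\[
\mathbb{Z}/p_1\mathbb{Z}\times\mathbb{Z}/p_2\mathbb{Z}\to\mathbb{Z}/q\mathbb{Z},\qquad (k_1,k_2)\mapsto k_1p_2+k_2p_1 \pmod q,
\]
is a bijection. This holds because the two groups have the same size and the map is injective: if $k_1p_2+k_2p_1\equiv 0 \pmod q$, then reducing mod $p_1$ gives $p_1\mid k_1p_2$, so $p_1\mid k_1$, and similarly $p_2\mid k_2$. The summand $e(\frac{ak^2+mk}{q})$ depends only on $k \bmod q$. Hence we may reindex the sum defining $S(a,m,q)$ in \eqref{4: S(a,m,q)DefEqn} by writing $k=k_1p_2+k_2p_1$ with $0\le k_1<p_1$ and $0\le k_2<p_2$.

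Next, expand the phase. We have $k^2=k_1^2p_2^2+2k_1k_2p_1p_2+k_2^2p_1^2$. The cross term contributes $e(2ak_1k_2)=1$ after division by $q=p_1p_2$. Dividing the remaining terms by $q$ gives
\[
\frac{ak^2+mk}{q}\equiv \frac{ap_2k_1^2+mk_1}{p_1}+\frac{ap_1k_2^2+mk_2}{p_2}\pmod 1.
\]
The exponential therefore factors as a product of a function of $k_1$ and a function of $k_2$. The double sum then splits into a product of two single sums. The normalisations also match, since $\frac1q=\frac1{p_1}\cdot\frac1{p_2}$. This yields
\[
S(a,m,q)=S(ap_2,m,p_1)\,S(ap_1,m,p_2).
\]

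There is no real obstacle here. The only care needed is to check that the reindexing is a genuine bijection on residues, which is the CRT step, and that the cross term disappears exactly, which relies on $2ak_1k_2$ being an integer. The coprimality conditions $(ap_2,p_1)=1$ and $(ap_1,p_2)=1$ are not needed for the identity itself, but they do hold when $(a,q)=1$. This matters because it is what lets the Gauss sum bounds be applied to each factor afterwards.
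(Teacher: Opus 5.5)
Your proof is correct and follows essentially the same route as the paper: you reindex the sum via the CRT bijection $(k_1,k_2)\mapsto k_1p_2+k_2p_1$, observe that the cross term $2ak_1k_2$ is an integer, and factor the double sum. The paper runs the same computation in the opposite direction, starting from the product; your version additionally verifies the bijection, tracks the $\frac{1}{q}=\frac{1}{p_1}\cdot\frac{1}{p_2}$ normalization that the paper's displayed computation drops, and correctly notes that only $(p_1,p_2)=1$ is needed for the identity.
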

\begin{proof}
    We compute directly

    \begin{align*}
         S(ap_2,m,p_1)S(ap_1,m,p_2)& = \sum_{k=0}^{p_1-1}e(\frac{ap_2k^2+mk}{p_1})\sum_{l=0}^{p_2-1}e(\frac{ap_1l^2+ml}{p_2}) \\ 
         & = \sum_{k=0}^{p_1-1}\sum_{l=0}^{p_2-1}e(\frac{ap_2^2k^2+p_2mk+ap_1^2l^2+p_1ml}{p_1p_2})\\
         &=\sum_{k=0}^{p_1-1}\sum_{l=0}^{p_2-1}e(\frac{a(p_2k+p_1l)^2+m(p_2k+p_1l)}{p_1p_2})
    \end{align*}
    Because of our assumption that $a,p_1,$ and $p_2$ are all coprime, the Chinese Remainder Theorem implies that $p_2k+p_1l$ runs through all integers mod $p_1p_2$ as $k$ and $l$ vary. This enables the change of variables $z=p_2k+p_1l$, which yields

    \begin{equation*}
        S(ap_2,m,p_1)S(ap_1,m,p_2) = \sum_{z=0}^{p_1p_2-1}e(\frac{az^2+mz}{p_1p_2}) = S(a,m,q).
    \end{equation*}
\end{proof}

\begin{lemma}\label{4: OddForPower4Lemma}
    Assume $4|q$ and $m$ is odd. Then

        \begin{equation}\label{4: GaussSummsMultiplicativeEqn}
        S(a,m,q) = 0.
    \end{equation}
\end{lemma}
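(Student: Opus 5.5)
The natural approach is a pairing argument: find a substitution $k\mapsto k'$ that permutes the residues modulo $q$ and multiplies each term of
\begin{equation*}
    S(a,m,q)=\frac{1}{q}\sum_{k=0}^{q-1}e\Bigl(\frac{ak^2+mk}{q}\Bigr)
\end{equation*}
by the same root of unity $\ne 1$. That forces the sum to vanish. Since $4\mid q$, the natural candidate is the shift $k\mapsto k+q/2$, which is a bijection on $\mathbb{Z}/q\mathbb{Z}$.

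First, expand the phase after the shift:
\begin{equation*}
    a(k+q/2)^2+m(k+q/2) = ak^2+mk + aqk + \frac{aq^2}{4} + \frac{mq}{2}.
\end{equation*}
Divide by $q$ and reduce modulo $1$. The term $aqk/q=ak$ is an integer and drops out. The term $aq^2/(4q)=aq/4$ is an integer because $4\mid q$, so it drops out as well; this is exactly where the hypothesis $4\mid q$ enters (for $q\equiv 2 \pmod 4$ it would leave a stray factor). What remains is $m/2$, and since $m$ is odd, $e(m/2)=-1$.

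Consequently, reindexing the sum by $k\mapsto k+q/2$ gives $S(a,m,q)=-S(a,m,q)$, and hence $S(a,m,q)=0$. Nothing about $a$ is needed here, not even $(a,q)=1$. The computation also works with the opposite sign convention for $m$ mentioned at the start of the section, since only the parity of $m$ matters.

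There is no real obstacle. The only step needing care is checking that every cross term is integral modulo $q$, and in particular that $aq/4\in\mathbb{Z}$ uses $4\mid q$ and not merely $2\mid q$.
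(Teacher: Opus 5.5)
Your proof is correct, and it takes a different, shorter route than the paper.

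The paper first uses multiplicativity to reduce to $q=2^j$ with $j\ge 2$. It then counts values of the phase. Since $a$ and $m$ are both odd, $ak^2+mk$ is always even. Two residues $k_1,k_2$ of the same parity that give the same value satisfy $(k_1-k_2)(a(k_1+k_2)+m)\equiv 0 \pmod{2^j}$, and the second factor is odd, so $k_1\equiv k_2$. Hence each even residue is attained exactly twice, and $S(a,m,2^j)$ collapses to $2^{1-j}\sum_{r=0}^{2^{j-1}-1}e(r/2^{j-1})=0$.

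Your shift $k\mapsto k+q/2$ produces the global factor $e(m/2)=-1$ in one line and avoids the reduction to prime powers entirely. As you observe, it also places no condition on $a$. The paper's argument genuinely needs $a$ odd, both for the parity count and so that the $2$-part $S(aq',m,2^j)$ (with $q'$ odd) produced by multiplicativity has odd leading coefficient. The paper gets this from its standing assumption $(a,q)=1$.

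One step is implicit in your write-up: the summand is $q$-periodic in $k$, because $a(k+q)^2+m(k+q)\equiv ak^2+mk \pmod q$. This is what makes the reindexing over a complete residue system legitimate.

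What the paper's route buys is finer information about how $ak^2+mk$ distributes modulo $2^j$, and consistency with the prime-power framework used to prove the general Gauss sum bound. For the vanishing statement itself, your involution is the cleaner argument.
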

\begin{proof}
    Because of multiplicativity, it is sufficient to show the result with $q=2^k$ and $k\geq 2$. We have

    \begin{equation*}
        S(a,m,2^j) = \sum_{k=0}^{2^j-1}e(\frac{ak^2+mk}{2^j}).
    \end{equation*}
    Let us now compute what values $ak^2+mk$ takes as $k$ varies. Because $a$ and $m$ are odd, $ak^2+mk$ is even for every value of $k$. Moreover, suppose we have two solutions $k_1,k_2$ of

    \begin{equation*}
        ak^2+mk + c = 0  \,(\text{mod }2^j),
    \end{equation*}
    and that $k_1$ and $k_2$ are the same parity. Then

    \begin{equation*}
        ak_1^2-ak_2^2 + mk_1 - mk_2 = (k_1-k_2)(a(k_1+k_2)+m) = 0 \,(\text{mod }2^j).
    \end{equation*}
    The term $a(k_1+k_2)+m$ is always odd. This forces 

    \begin{equation*}
        (k_1-k_2) = 0\,(\text{mod }2^j).
    \end{equation*}
    Therefore, we can have, at most, one odd solution and one even solution mod $2^j$. As $k$ varies in $\{0,...,2^{j}-1\}$, the value $ak^2+mk$ runs through each even number in $\mathbb{Z}/2^j\Z{}$ exactly twice by basic counting. We can write the sum as

    \begin{equation*}
        S(a,m,2^j) = 2 \sum_{k=0}^{2^{j}-1}e(\frac{2k}{2^j}).
    \end{equation*}
    Because $j\geq 2$, the above sum is 0 by the geometric series formula.
    
\end{proof}

\begin{lemma}\label{4: QuadRecipGaussSumLemma}
    Suppose $aq+m$ is even. Then

        \begin{equation}\label{4: QuadRecipGaussSumEqn}
        |S(a,m,q)| = |\frac{q}{a}|^{-1/2}|S(-q,-m,a)|.
    \end{equation}
\end{lemma}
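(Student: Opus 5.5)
The plan is to prove the stronger identity with an explicit unimodular factor, namely
\[
\sum_{k=0}^{q-1} e\Bigl(\frac{ak^2+mk}{q}\Bigr)=\Bigl|\frac{q}{a}\Bigr|^{1/2} e\Bigl(\frac{|aq|-m^2}{8aq}\Bigr)\sum_{n=0}^{|a|-1} e\Bigl(-\frac{qn^2+mn}{a}\Bigr),
\]
valid whenever $aq+m$ is even. This is the reciprocity law for generalized quadratic Gauss sums (Landsberg--Schaar type; see Theorem 1.2.2 in \cite{GaussJacobiBook}), and in practice I would import it from there as the paper does for the other Gauss-sum facts. To get the stated bound from it, multiply by $q^{-1}$. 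The left side becomes $S(a,m,q)$. The right side becomes $q^{-1}|q/a|^{1/2}\cdot |a|\,|S(-q,-m,a)| = |q/a|^{-1/2}|S(-q,-m,a)|$, after taking absolute values and using the normalization in \eqref{4: S(a,m,q)DefEqn}. By symmetry of the statement I may take $a,q>0$.

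To prove the identity I would regularize with a Gaussian. Fix $\epsilon>0$ and set $g_\epsilon(x)=e^{-\pi\epsilon x^2}e\bigl((ax^2+mx)/q\bigr)$, then evaluate $\Theta_\epsilon=\sum_{n\in\Z{}}g_\epsilon(n)$ in two ways.

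First, split $n=kq+r$ with $0\le r<q$. The phase depends only on $r$ modulo $q$, since $a(kq+r)^2+m(kq+r)\equiv ar^2+mr \pmod q$. The remaining Gaussian sum in $k$ is a Riemann sum, and it gives
\[
\Theta_\epsilon=\frac{1+o(1)}{q\sqrt{\epsilon}}\sum_{r=0}^{q-1}e\Bigl(\frac{ar^2+mr}{q}\Bigr)\quad\text{as }\epsilon\to0 .
\]

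Second, apply Poisson summation. The function $g_\epsilon$ is a complex Gaussian with parameter $w=\epsilon-2ia/q$, whose real part is positive. Its transform is $\widehat{g_\epsilon}(\xi)=w^{-1/2}\exp\bigl(-\pi(\xi-m/q)^2/w\bigr)$, using the principal branch. As $\epsilon\to 0$ this has modulus $\sim(q/2a)^{1/2}$ times a Gaussian envelope of width $\sim\epsilon^{-1/2}(a/q)$. The oscillating part tends to $e\bigl(-(q\xi-m)^2/(4aq)\bigr)$.

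The key arithmetic step is periodicity. I would check that $\xi\mapsto e\bigl(-(q\xi-m)^2/(4aq)\bigr)$ is periodic in $\xi$ with period $a$ exactly when $aq+m$ is even; this is where the parity hypothesis enters, because shifting $\xi$ by $a$ changes the exponent by $-(2a q\xi - 2am + a^2q)/(4a)$, which must be an integer modulo the parity condition. Granting the periodicity, the Poisson side becomes an average over one period times the envelope mass, so
\[
\Theta_\epsilon\approx \frac{1}{a\sqrt{\epsilon}}\Bigl(\frac{q}{2a}\Bigr)^{1/2}\cdots\sum_{\xi \bmod a}e\Bigl(-\frac{(q\xi-m)^2}{4aq}\Bigr).
\]
Completing the square, $(q\xi-m)^2/(4aq)=(q\xi^2-m\xi)/(4a)\cdot\ldots+m^2/(4aq)$. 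I then rewrite the sum over $\xi$ as the sum over $n \bmod a$ of $e(-(qn^2+mn)/a)$ times the constant phase $e(-m^2/(4aq))$, possibly after the substitution $\xi\to -n$, and after using the parity condition to halve the modulus.

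Finally, I compare the two $\epsilon^{-1/2}$ asymptotics and let $\epsilon\to0$. This yields the identity, up to a unimodular constant coming from $w^{-1/2}$ (an eighth root of unity) and from $e(-m^2/(4aq))$. Only its modulus is needed for the lemma.

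I expect the main obstacle to be the bookkeeping in this Poisson step. It has two parts. One is justifying the Riemann-sum limit on the Fourier side for a slowly varying envelope multiplied by a periodic phase; this is routine. The other is getting the modulus reduction from $4aq$ to $a$ exactly right using $aq+m$ even. If the direct completing-the-square manipulation gets messy, I would instead verify the reduction by checking that the summand is invariant under $\xi\mapsto\xi+a$ and then summing over one period.
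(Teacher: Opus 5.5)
The identity you set out to prove is false, and so is the lemma as literally stated. Neither importing it nor your Poisson sketch can succeed.

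\textbf{Counterexamples.} Take $a=1$, $m=0$, $q=2$, so $aq+m$ is even. Then $S(1,0,2)=\tfrac12\bigl(1+e(\tfrac12)\bigr)=0$, while $|q/a|^{-1/2}|S(-2,0,1)|=2^{-1/2}$. With $q=4$ instead, $|S(1,0,4)|=\bigl|\tfrac{1+i}{2}\bigr|=2^{-1/2}\neq\tfrac12$. Taking $a=2$, $q=3$, $m=0$ gives $3^{-1/2}$ on the left and $0$ on the right, so even an inequality fails. Your displayed identity already fails at the first example: its left side is $0$ and its right side has modulus $\sqrt2$.

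\textbf{Source of the error.} It is a normalization slip, and the paper's one-line citation makes the same slip. The reciprocity theorem in \cite{GaussJacobiBook} concerns $\sum_{n=0}^{|c|-1}e^{\pi i(an^2+bn)/c}=\sum_{n=0}^{|c|-1}e\bigl(\tfrac{an^2+bn}{2c}\bigr)$. The hypothesis that $ac+b$ is even is exactly what makes \emph{that} summand $|c|$-periodic. For $e\bigl(\tfrac{ak^2+mk}{q}\bigr)$, periodicity mod $q$ is automatic, so the parity hypothesis does nothing. You converted the unimodular factor to the $e(\cdot)$ normalization but not the two sums. Applied correctly, with $(2a,2m,q)$ in the roles of $(a,b,c)$, the theorem gives a factor $|q/2a|^{1/2}$ and a dual sum of length $2|a|$ to modulus $4a$, not $S(-q,-m,a)$.

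\textbf{Where your Poisson argument breaks.} It fails at the step you flagged. Under the parity hypothesis, $\xi\mapsto e\bigl(-(q\xi-m)^2/(4aq)\bigr)$ does not have period $a$. The shift changes the phase by $\tfrac{q\xi-m}{2}+\tfrac{aq}{4}$, which is not an integer for $a=q=m=1$, nor for $a=1$, $q=2$, $m=0$. The phase always has period $2a$, with no parity assumption. Run the theta-function computation with that period: the envelope has mass $2a/(q\sqrt{\epsilon})$, to be divided by the period $2a$. For $a,q>0$ this gives
\[
S(a,m,q)=e^{\pi i/4}\Bigl(\frac{2a}{q}\Bigr)^{1/2}\frac{1}{2a}\sum_{n=0}^{2a-1}e\Bigl(-\frac{(qn-m)^2}{4aq}\Bigr).
\]
This is the statement that should replace the lemma, and it is not $|q/a|^{-1/2}|S(-q,-m,a)|$ in general.

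\textbf{What survives.} The paper only uses the case $q=2^j$ with $j\geq 2$, $m$ even, $a$ odd. There, after removing the constant phase $e\bigl(-\tfrac{m^2}{4aq}\bigr)$, the summand becomes $a$-periodic. This gives
\[
|S(a,m,2^j)|=\Bigl(\frac{2a}{2^j}\Bigr)^{1/2}\,\bigl|S(-2^{j-2},m/2,a)\bigr|\le\sqrt2\,2^{-j/2}.
\]
So the bound in \Cref{4: GaussSumBoundProp} survives, but only through this corrected formula, not through the identity you proposed.
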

\begin{proof}
    This is Theorem 1.2.2 in \cite{GaussJacobiBook} with our normalization.
\end{proof}

These results are enough to prove \Cref{4: GaussSumBoundProp} for all natural numbers $q$.

\begin{proof}[Proof of \Cref{4: GaussSumBoundProp}]
    We always assume $(a,q)=1$. Because of multiplicativity, we may reduce to proving

    \begin{equation*}
        |S(a,m,p^j)|\leq p^{-j/2}
    \end{equation*}
    for all primes $p$  and integers $a,j$ such that $j\geq 1$ and $(a,q)=1$. In the case $p=2$, we allow a multiplied constant.
    
    Suppose $p$ is odd. Then $p^j$ is as well. In what follows, denote $x^*$ the multiplicative inverse of $x$ modulo $p^j$ and $(\frac{a}{p^j})$ the Jacobi symbol. Direct computation and completing the square gives

    \begin{align*}
        S(a,m,p^j) = & \frac{1}{p^j}e(-4^*a^*m^2/p^j)\sum_{k=0}^{p^j-1}e(a/p^j(k^2-2k2^*a^*m+4^*(a^*)^2m^2))\\
        &=\frac{1}{p^j}e(-4^*a^*m^2/p^j)\sum_{k=0}^{p^j-1}e(k^2a/p^j)\\
        &= e(-4^*a^*m^2/p^j)(\frac{a}{p^j})S(1,0,p^j).
    \end{align*}
    So the result follows in the case from the bound on the standard quadratic Gauss sum. Now suppose $p^j = 2^j$ for some j. Recall the bound for $S(1,0,2^j)$ follows from Corollary 1.2.3 in \cite{GaussJacobiBook}.
    
    The result is trivial if $j=1$. If $j\geq 2$, then $2^j$ is divisible by $4$. If $m$ is odd, then \Cref{4: OddForPower4Lemma} immediately gives the result as the sum is $0$. If $m$ is even, then we can apply \Cref{4: QuadRecipGaussSumLemma} with $q=2^j$ to get

    \begin{equation*}
        |S(a,m,2^j)| = |\frac{2^j}{a}|^{-1/2}|S(-2^j,-m,a)|.
    \end{equation*}
    The result then follows as $a$ is coprime to $2^j$, and is therefore odd, after another application of multiplicativity. 
\end{proof}
These bounds suffice for our results in \Cref{Section2}. In \Cref{Section3}, we sum over $a$ in the expression for $K_{Q,s}$, yielding $\mathcal{S}(m,q)$. Ostensibly, the sum over $a$ gives additional cancellation, however, it is difficult to leverage this to prove sharp results.

\subsection{Difficulty in improving the range of $p$}\label{4 SubSectionFailureToImprove}
We wish to use cancellation in the sum over $a$ to get an improved bound in $\mathcal{S}(m,q)$. For this purpose, we have the following lemma.
\begin{lemma}\label{3: singularSeriesLemma}
The function $\mathcal{S}(m,q)$ is multiplicative in $q$. If $q$ is odd, it has the following expression.

\begin{equation}\label{3: SingSeriesOddValueEqn}
     \mathcal{S}(m,q) = S(1,0,q)^d \sum_{(a,q)=1}\bigPara{\frac{a}{q}}^de((\nu/q)a^*-(\la/q)a),
\end{equation}
where $\nu=-4^*(m_1^2+...+m_d^2)$.
\end{lemma}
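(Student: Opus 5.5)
The plan has two parts: multiplicativity of $\mathcal{S}(m,q)$ by the Chinese Remainder Theorem, then the odd-$q$ formula by completing the square. Multiplicativity is the delicate step, because the naive splitting introduces unit twists, and I would have to check whether they can be removed.

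\textbf{Step 1: splitting $\mathcal{S}(m,q)$ over coprime factors.} Let $q=q_1q_2$ with $(q_1,q_2)=1$. Let $\bar q_2$ denote the inverse of $q_2$ modulo $q_1$ and $\bar q_1$ the inverse of $q_1$ modulo $q_2$. The identity
\[
\frac{u}{q_1q_2}\equiv \frac{u\bar q_2}{q_1}+\frac{u\bar q_1}{q_2}\pmod 1
\]
holds for every integer $u$, since $\bar q_2q_2+\bar q_1q_1\equiv 1 \pmod{q_1q_2}$.

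\begin{itemize}
\item I would parametrize $k \bmod q$ by the pair $(k \bmod q_1,\,k \bmod q_2)$, as in the proof of \Cref{4: GaussSummsMultiplicativeLemma}. Applying the identity to $u=ak^2+mk$ factors $S(a,m,q)$ into a mod-$q_1$ Gauss sum times a mod-$q_2$ Gauss sum.
\item Applying the same identity to $u=-\la a$ splits the phase: $e(-\la a/q)=e(-\la a\bar q_2/q_1)\,e(-\la a\bar q_1/q_2)$.
\item As $a$ runs over units mod $q$, the pair $(b_1,b_2)=(a\bar q_2 \bmod q_1,\ a\bar q_1 \bmod q_2)$ runs bijectively over pairs of units.
\end{itemize}

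The product over $j$ and the sum over $a$ should then factor into a sum over $b_1$ times a sum over $b_2$. Each factor has exactly the shape of $\mathcal{S}(\cdot,q_i)$.

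\textbf{Expected obstacle.} The Gauss sum factors carry the residues through the inverses $\bar q_i$. Removing them requires a substitution $k\mapsto c k$ by a unit $c$, and that substitution moves the twist between the quadratic coefficient, the linear coefficient $m$, and the phase involving $\la$.

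I would first try to choose the substitutions so that every twist cancels, giving literally $\mathcal{S}(m,q_1)\mathcal{S}(m,q_2)$. From the odd-$q$ formula in Step 2 I expect this not to close exactly: after the substitution a square-unit factor survives in front of either $\nu$ or $\la$. If so, I would state the multiplicativity in the form that is actually used later. Namely, $\mathcal{S}(m,q)$ factors as a product of local sums of the same type, whose parameters $m,\la$ are rescaled by units of $\Z{}/q_i$. Such rescalings change neither the size estimates nor the structure of the local factors, and the size estimates are all that the subsequent discussion of cancellation requires.

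\textbf{Step 2: the formula for odd $q$.} Here $2$ and $4$ are invertible mod $q$, so I would complete the square exactly as in the proof of \Cref{4: GaussSumBoundProp}, now for general odd $q$ rather than a prime power. This gives
\[
S(a,m_j,q)=e\!\left(-\frac{4^*a^*m_j^2}{q}\right)S(a,0,q).
\]
I would then use the standard fact for odd $q$ that $S(a,0,q)=\bigl(\tfrac{a}{q}\bigr)S(1,0,q)$, with the Jacobi symbol. This fact reduces by multiplicativity (\Cref{4: GaussSummsMultiplicativeLemma}) to odd prime powers, where it is classical.

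Taking the product over $j=1,\dots,d$ gives the factor $S(1,0,q)^d\bigl(\tfrac{a}{q}\bigr)^d e(\nu a^*/q)$, where $\nu=-4^*(m_1^2+\dots+m_d^2)$. Multiplying by $e(-\la a/q)$ and summing over $(a,q)=1$ yields \eqref{3: SingSeriesOddValueEqn}. This exhibits $\mathcal{S}(m,q)$ as $S(1,0,q)^d$ times a Kloosterman-type sum (a Sali\'e sum when $d$ is odd).
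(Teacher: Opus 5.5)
The paper gives no proof of this lemma; it calls it standard and omits it. Your Step 2 is that standard argument. You complete the square for odd $q$ and then use $S(a,0,q)=\left(\frac{a}{q}\right)S(1,0,q)$. The reduction of that identity to odd prime powers works because the factor $\left(\frac{q_2}{q_1}\right)\left(\frac{q_1}{q_2}\right)$ appears in both $S(a,0,q_1q_2)$ and $S(1,0,q_1q_2)$. That part is fine.

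In Step 1 your suspicion is right, and you should state it as a fact rather than a hedge: $\mathcal{S}(m,q)$ is not multiplicative in $q$ as the lemma claims. Your own parametrization already closes with no further substitution. The quadratic coefficient and the $\la$-phase both carry the same unit $b_1=a\bar q_2$, so you get the exact twisted identity
\[
\mathcal{S}(m,q_1q_2)=\mathcal{S}(\bar q_2 m,q_1)\,\mathcal{S}(\bar q_1 m,q_2).
\]
The substitutions $k\mapsto \bar c k$ and $a\mapsto c^2a$ show that $\mathcal{S}(cm,q)$ equals $\mathcal{S}(m,q)$ with $\la$ replaced by $c^2\la$. So the twist can be moved between $m$ and $\la$, but not removed.

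A concrete failure: take $d$ even, $m=(1,0,\dots,0)$, $q_1=5$, $q_2=3$ and $\la\equiv 4 \pmod{15}$ (e.g.\ $N=17$). For even $d$ the Jacobi symbols drop out, so $\mathcal{S}(m,q)=S(1,0,q)^d\,\textbf{Kl}(-\la,\nu,q)$. Also $S(1,0,15)=S(1,0,5)S(1,0,3)\neq 0$. One computes $\textbf{Kl}(1,1,5)=\frac{3-\sqrt5}{2}$, $\textbf{Kl}(2,2,3)=-1$ and $\textbf{Kl}(11,11,15)=-\frac{3+\sqrt5}{2}$. Hence $\mathcal{S}(m,15)\neq\mathcal{S}(m,5)\,\mathcal{S}(m,3)$. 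This agrees with the twisted identity: the $5$-part of $\mathcal{S}(m,15)$ is $\mathcal{S}(2m,5)$, which carries $\textbf{Kl}(1,4,5)=\frac{3+\sqrt5}{2}$.

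Literal multiplicativity survives only in special cases, such as $m=0$ (the classical singular series) or $q\mid\la$. As you say, the twisted form is all the subsequent discussion needs. The Kloosterman and Sali\'e bounds used there depend on $(\nu,\la)$ only through $\gcd(\nu,\la,q)$, and this is unchanged when $\nu$ is multiplied by a unit.
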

The proof is standard, but we do not present it as we will not prove results in this subsection.  As the Jacobi symbol takes on the values $0,1,-1$, the expression for $\mathcal{S}(m,q)$ reduces to an expression involving Kloosterman sums when $d$ is even and Sail\'e sums when $d$ is odd. We now recall the definitions of these sums.

\begin{definition}\label{3: SaileKloostermanSumsProp}
    We define a Kloosterman sum as

    \begin{equation}\label{KloostSumDefEqn}
        \textbf{Kl}(\alpha,\beta,q) \coloneq\sum_{(a,q)=1}e(\frac{a\alpha + a^*\beta}{q}).
    \end{equation}
    And the Sail\'e sums

    \begin{equation}\label{SalSumDefEqn}
        \textbf{Sa}(\alpha,\beta,q) \coloneq\sum_{(a,q)=1}(\frac{a}{q})e(\frac{a\alpha + a^*\beta}{q}).
    \end{equation}
\end{definition}
For $q$ a prime power, we have square root cancellation and each can be bounded by $\lesssim q^{1/2}$, see \cite{conradKloosterman2002} for example. For general $q$, the best bound we can hope for both these sums is

\begin{equation*}
    \textbf{Kl}(\alpha,\beta,q),\textbf{Sa}(\alpha,\beta,q) \lesssim \tau(q)\sqrt{\gcd(\alpha,\beta,q)}\sqrt{q}.
\end{equation*}

Here $\tau(q)$ is the number of divisors of $q$. In our case we have $\alpha = -\la$ and $\beta = \nu$. As we assume $(a,q)=1$, the bound simplifies in our case to

\begin{equation*}
    \textbf{Kl}(-\la,\nu,q),\textbf{Sa}(-\la,\nu,q) \lesssim q^\eps\sqrt{\gcd(\nu,\la,q)}\sqrt{q}.
\end{equation*}

This is the bound stated at the end of Section 3 in \cite{bourgainDemeterDiscreteRestrictImprovements2013}. The power of $q^\epsilon$ is inconsequential for our purposes. The factor $\sqrt{\gcd(\la,q)}$ is not. If $q|\la$, this means we get no cancellation in the sum using this bound, and we default to using the trivial bound $\lesssim q$ from the triangle inequality. However, both the Kloosterman and Sail\'e sums simplify in this case to sums with at least square root cancellation, although a different argument must be presented. There are additional cases to consider, however. $\la$ and $q$ could have a large common factor but $q$ not divide $\la$. In this case, we simplify to a sum that is not classical and the analysis is more complicated. There is also an additional summation over $m$ that may assist in proving better bounds. We intend to explore this in future work to prove both sharp bounds and bounds with an $\eps$-loss. 

A difficulty with improving the bound in \Cref{3: KQsFourierCoeffBoundProp} is the following. An implicit step in \cite{bourgainTorusEFunc1993} and \cite{bourgainDemeterDiscreteRestrictImprovements2013} is to restrict to $Q\lesssim N^{\eps_0}$ and use the triangle inequality.  One can prove an improved bound, replacing $Q^2$ by $Q$ in our proposition, for $Q\gtrsim N^{\eps_0}$ using essentially the same proof as for \Cref{2: KerrFourierBoundProp} and an additional step using a mean zero normalization. For small $Q$, the best we can hope for is the triangle inequality. The improved bound for moderate $Q\gtrsim N^{\eps_0}$ does not change our final bound as we are limited by the terms $Q\sim 1$.

Take $Q\sim \log \log N$, for example, and consider the integer formed by

\begin{equation*}
    X=\prod_{q\sim Q}q.
\end{equation*}
This integer satisfies $X\lesssim (\log \log N)^{\log \log N}\lesssim N$. The same approach we used to estimate \eqref{2: FourierTransNumberThrBoundEqn} will not work as we could have many factors that divide $l$. Therefore, we must use the trivial estimates near $Q\sim 1$, which restricts the range of $p$ we can apply our argument in. 

\begin{remark}
If we assume $\la$ does not have many divisors, we do get an improved bound. In the ideal situation, we have $\la$ being prime, and the techniques of this paper give the sharp results for $p>\frac{2(d+1)}{d-3}$ for $d\geq 5$, by using the full square root cancellation in Kloosterman and Sail\'e sums. Intermediate bounds can be proven based on the structure of the divisors of $\la$, but as we are interested in the strongest bound for the full set of eigenvalues we do not record them here.  
\end{remark}
 
\section{Comparison with result with loss}\label{SectionComparison}
In this section we will catalog the differences between our proofs and the proofs in \cite{bourgainDemeterDiscreteRestrictImprovements2013}.

A more careful integration by parts argument allows us to prove a no loss version of the dispersive type estimate for $G(t,x)$. This is already known by other methods when $q\leq N$. For the $\log$-loss argument we had to show the same holds near rationals with very small denominator.

A more careful analysis of $\mathcal{F}(K-K^Q)$ gives that the $Q^\eps$ bound there is in fact a $\log$ coming from the normalization constant $\log Q$. Such a factor is always present if we take our denominators to be primes. Taking the set of denominators to be all numbers eliminates this loss, but then the roots of unity argument used to simplify the expression \eqref{2: FourierTransNumberThrBoundEqn} no longer holds, and a loss is incurred there regardless. 

When proving the sharp result, our philosophy is to take any power of $Q$ as long as we do not have an $\eps$-loss in the parameters $N$ and $2^s$. This gives us worse bounds with respect to $p$ compared to previous works. Additionally, our intolerance of $\eps$-losses prevents us from improving over the trivial bound in a few places, most notably \eqref{3: S(m,q)TriangleIneqEqn} and \eqref{3: smallQKQsFourierCoeffBoundEqn}. It seems different techniques are needed to improve these bounds. 
\bibliographystyle{amsalpha}
\bibliography{zBib}
\Addresses
\end{document}